\documentclass[10.5pt,a4paper]{article}

\usepackage{graphicx,latexsym,euscript,makeidx,color,bm}
\usepackage{amsmath,amsfonts,amssymb,amsthm,thmtools,mathtools,mathrsfs,enumerate}
\usepackage[colorlinks,linkcolor=blue,citecolor=red]{hyperref}


\usepackage{geometry}
\geometry{left=3cm,right=3cm,top=3cm,bottom=3cm}

\sloppy\allowdisplaybreaks[4]
\setlength{\jot}{1ex}


   \def\cA{{\cal A}}  \def\hu{\hat{u}}
 \def\sB{\mathscr{B}}    
   \def\cC{{\cal C}}  
     
\def\dbE{\mathbb{E}}     
\def\dbF{\mathbb{F}} \def\sF{\mathscr{F}}  \def\cF{{\cal F}}  
     
\def\dbH{\mathbb{H}}     
   \def\cI{{\cal I}}  
     
   \def\cK{{\cal K}}  \def\bX{\bar X_{\scT}}
   \def\cL{{\cal L}}  \def\bY{\bar Y_{\scT}}
   \def\cM{{\cal M}}  \def\bZ{\bar Z_{\scT}}
   \def\cN{{\cal N}}  \def\bu{\bar u_{1,\scT}}
   \def\cO{{\cal O}}  \def\bv{\bar u_{2,\scT}}
\def\dbP{\mathbb{P}}   \def\cP{{\cal P}}
   
\def\dbR{\mathbb{R}}   
\def\dbS{\mathbb{S}}   
   
 \def\sU{\mathscr{U}}

   \def\cX{{\cal X}}
   \def\cY{{\cal Y}}
   

\def\ss{\smallskip}   \def\lt{\left}          \def\hb{\hbox}
\def\ms{\medskip}     \def\rt{\right}         \def\ae{\text{a.e.}}
\def\h{\hat}          \def\lan{\langle}       \def\as{\text{a.s.}}
\def\q{\quad}         \def\ran{\rangle}       
\def\qq{\qquad}             \def\1n{\negthinspace }
\def\no{\noindent}          \def\2n{\1n\1n}
\def\hp{\hphantom}         \def\scp{\scriptscriptstyle}
\def\nn{\nonumber}         \def\scT{\scp T}
\def\rf{\eqref}       \def\Blan{\Big\lan}     
\def\cd{\cdot}        \def\Bran{\Big\ran}     
\def\deq{\triangleq}  \def\({\Big(}           
       \def\){\Big)}           \def\les{\leqslant}
   \def\[{\Big[}           \def\ges{\geqslant}
  \def\]{\Big]}           
\def\wh{\widehat}                             

\def\a{\alpha}       \def\l{\lambda}    
\def\b{\beta}               \def\F{\varPhi}
\def\d{\delta}           \def\G{\varGamma}
\def\e{\varepsilon}      \def\L{\varLambda}
\def\f{\varphi}             \def\Om{\varOmega}
\def\g{\gamma}       \def\si{\sigma}    \def\Si{\varSigma}
\def\i{\infty}             \def\Th{\varTheta}
\def\k{\kappa}                          

\def\bde{\begin{definition}\label}
\def\ede{\end{definition}}
\def\bel{\begin{equation}\label}
\def\ee{\end{equation}}
\def\bt{\begin{theorem}\label}
\def\et{\end{theorem}}
\def\bc{\begin{corollary}\label}
\def\ec{\end{corollary}}
\def\bl{\begin{lemma}\label}
\def\el{\end{lemma}}
\def\bp{\begin{proposition}\label}
\def\ep{\end{proposition}}
\def\ba{\begin{array}}
\def\ea{\end{array}}

\newtheoremstyle{thry}
{}      
{}      
{\sl}   
{}      
{\bf}   
{.}     
{.5em}  
{}      
\theoremstyle{thry}

\newtheorem{theorem}{Theorem}[section]
\newtheorem{proposition}[theorem]{Proposition}
\newtheorem{corollary}[theorem]{Corollary}
\newtheorem{lemma}[theorem]{Lemma}

\theoremstyle{definition}
\newtheorem{definition}[theorem]{Definition}

\theoremstyle{remark}
\newtheorem{remark}[theorem]{Remark}

\def\punct{}
\newtheoremstyle{dotless}{}{}{\rm}{}{\bf}{\punct}{.5em}{}
\theoremstyle{dotless}

\makeatletter
   
   \@addtoreset{equation}{section}
   \newcommand{\setword}[2]{%
   \phantomsection
   #1\def\@currentlabel{\unexpanded{#1}}\label{#2}%
   }
\makeatother


\begin{document}
\title{\bf Long-Time Behavior of Zero-Sum Linear-Quadratic Stochastic Differential Games}
\author{Jingrui Sun\thanks{Department of Mathematics and SUSTech International Center for Mathematics,
                           Southern University of Science and Technology, Shenzhen, Guangdong,
                           518055, China (Email: sunjr@sustech.edu.cn).
                           This author is supported in part by NSFC grants 12322118 and 12271242, and
                           Shenzhen Fundamental Research General Program JCYJ20220530112814032. }
~~~and~~
Jiongmin Yong\thanks{Department of Mathematics, University of Central Florida, Orlando, FL 32816, USA
                    (Email: jiongmin.yong@ucf.edu).
                    This author is supported in part by NSF grant DMS-2305475. }
}

\maketitle

\no{\bf Abstract.}
The paper investigates the long-time behavior of zero-sum linear-quadratic stochastic
differential games, aiming to demonstrate that, under appropriate conditions, both the
saddle strategy and the optimal state process exhibit the exponential turnpike property.
Namely, for the majority of the time horizon, the distributions of the saddle strategy
and the optimal state process closely stay near certain (time-invariant) distributions
$\nu_1^*$, $\nu_2^*$ and $\mu^*$, respectively.
Additionally, as a byproduct, we solve the infinite horizon version of the differential
game and derive closed-loop representations for its open-loop saddle strategy, which has
not been proved in the literature.

\ms
\no{\bf Key words.}
Stochastic differential game, zero-sum, linear-quadratic, saddle strategy,
turnpike property, Riccati equation.

\ms
\no{\bf AMS 2020 Mathematics Subject Classification.}
91A05, 91A15, 49N10, 49N70.

\section{Introduction}\label{Sec:Intro}

Let $(\Om,\sF,\dbF,\dbP)$ be a complete filtered probability space satisfying the usual conditions. Suppose that a standard one-dimensional Brownian motion
$W=\{W(t);\,t\ges 0\}$ is defined on this space with $\dbF\equiv\{\sF_t\}_{t\ges0}$ being its natural filtration augmented by all the $\dbP$-null sets in $\sF$. Consider the controlled linear stochastic
differential equation (SDE, for short)
\begin{equation}\label{state}\left\{\begin{aligned}
dX(t) &= [AX(t)+B_1u_1(t)+B_2u_2(t)+b]dt \\
&\hp{=\ } +[CX(t)+D_1u_1(t)+D_2u_2(t)+\si]dW(t),\q 0\les t\les T,\\
X(0) &=x
\end{aligned}\right.\end{equation}
and the quadratic performance functional
\begin{align}\label{J_T}
J_{\scT}(x;u_1(\cd),u_2(\cd)) \deq \dbE\!\int_0^T\!\bigg[\Blan\!\!
   \begin{pmatrix}Q   & \!\!S_1^\top & \!\!S_2^\top \\
                  S_1 & \!\!R_{11}   & \!\!R_{12}   \\
                  S_2 & \!\!R_{21}   & \!\!R_{22}   \end{pmatrix}\!\!
   \begin{pmatrix}X(t) \\ u_1(t) \\ u_2(t)\end{pmatrix}\!,\!
   \begin{pmatrix}X(t) \\ u_1(t) \\ u_2(t)\end{pmatrix}\!\!\Bran \!+\! 2\Blan\!\!
   \begin{pmatrix}  q  \\ r_1    \\ r_2   \end{pmatrix}\!,\!
   \begin{pmatrix}X(t) \\ u_1(t) \\ u_2(t)\end{pmatrix}\!\!\Bran\bigg]dt,
\end{align}
where in state equation \rf{state}, the coefficients
$$ A,C,\in\dbR^{n\times n}, \q  B_i,D_i\in\dbR^{n\times m_i}, \q  b,\si\in\dbR^n,\q(i=1,2) $$
are constant matrices/vectors, and in cost functional \rf{J_T}, the weighting coefficients
$$ Q\in\dbR^{n\times n}, \q S_i\in\dbR^{m_i\times n}, \q R_{ij}\in\dbR^{m_i\times m_j},
\q q\in\dbR^n, \q r_i\in\dbR^{m_i}, \q (i,j=1,2) $$
are also constant matrices, with the square block matrix in \rf{J_T} being symmetric.
The superscript $\top$ in \rf{J_T} denotes the transpose of matrices,
and $\lan\cd\,,\cd\ran$ stands for the usual Euclidean inner product of vectors 
(whose induced norm is denoted by $|\cd|$). For simplicity of notation, we write $\f(\cd)\in\dbF$ if a process $\f(\cd)$ is
$\dbF$-progressively measurable. Let
\begin{align}\label{sU[0,T]}
\sU_i[0,T] \deq \Big\{\f:[0,T]\times\Om\to\dbR^{m_i}\bigm|\f(\cd)\in\dbF~\hb{and}
~\dbE\int_0^T|\f(t)|^2dt<\i\Big\}, \q i=1,2.
\end{align}
It is clear that for each initial state $x\in\dbR^n$ and control pair $(u_1(\cd),u_2(\cd))\in\sU_1[0,T]\times\sU_2[0,T]\equiv\sU[0,T]$, the
state equation \rf{state} admits a unique square-integrable solution
$X(\cd)\equiv X(\cd\,;x,u_1(\cd),u_2(\cd))$.
Thus, the corresponding cost functional \rf{J_T} is well-defined.
The {\it zero-sum linear-quadratic (LQ, for short) stochastic differential game}
over the finite time horizon $[0,T]$ can be stated as follows.

\ms

{\bf Problem (DG)$_{\scT}$.}
For any initial state $x\in\dbR^n$, find a pair
$(\bu(\cd),\bv(\cd))\in\sU_1[0,T]\times\sU_2[0,T]$ such that
\begin{align*}
J_{\scT}(x;\bu(\cd),u_2(\cd))
\les J_{\scT}(x;\bu(\cd),\bv(\cd)) \equiv V_{\scT}(x)
\les J_{\scT}(x;u_1(\cd),\bv(\cd)), \\
\forall(u_1(\cd),u_2(\cd))\in\sU_1[0,T]\times\sU_2[0,T].
\end{align*}

\ss

From the above, we see that Player 1 is the minimizer (by taking $u_1(\cd)\in\sU_1[0,T]$)
and Player 2 is the maximizer (by taking $u_2(\cd)\in\sU_2[0,T]$).
The pair $(\bu(\cd),\bv(\cd))$ (if it exists) is called an {\it open-loop saddle strategy}
of Problem (DG)$_{\scT}$ at the initial state $x$, $\bX(\cd)$ is called the
corresponding {\it open-loop optimal state process}, and $V_{\scT}(\cd)$ is called the
{\it value function} of the game.
We also refer to $(\bX(\cd),\bu(\cd),\bv(\cd))$ as an {\it open-loop optimal triple}
at $x\in\dbR^n$.
When such a triple exists for every initial state $x$, we say that Problem (DG)$_{\scT}$
is {\it open-loop solvable}. In the case that $b,\si,q,r_1,r_2$ all vanish, we denote the corresponding game by
Problem (DG)$_{\scT}^{\scp0}$ and call it a {\it homogenous differential game} on $[0,T]$.
The performance functional and value function of Problem (DG)$_{\scT}^{\scp0}$ are denoted by $J_{\scT}^{\scp0}(x;u_1(\cd),u_2(\cd))$ and $V_{\scT}^{\scp0}(\cd)$, respectively.

\ms

It is clear that in the above, the open-loop saddle strategy (if it exists) is seemingly anticipating.
Namely, in determining the value $(\bu(t),\bv(t))$ of $(\bu(\cd),\bv(\cd))$ at $t\in[0,T)$,
some future information on the triple $(\bX(s),\bu(s),\bv(s))$ for $s\in[t,T]$ will be used.
This can be seen, at least, from the optimality conditions.
Thus, from this point of view, the above open-loop saddle strategy is not practically realizable.
To get practical feasibility, one could introduce the set of state-feedback controls.
For any $(\Th_i(\cd),v_i(\cd))\in L^\i(0,T;\dbR^{m_i\times n})\times\sU_i[0,T]$, ($i=1,2$),
the closed-loop system reads:
\begin{equation*}\left\{\begin{aligned}
dX(t) &= \big\{[A+B_1\Th_1(t)+B_2\Th_2(t)]X(t) +[B_1v_1(t)+B_2v_2(t)+b]\big\}dt \\
&\hp{=\ }+\big\{[C+D_1\Th_1(t)+D_2\Th_2(t)]X(t) +[D_1v_1(t)+D_2v_2(t)+\si]\big\}dW(t), \q 0\les t\les T,\\
X(0) &=x.
\end{aligned}\right.\end{equation*}
Correspondingly, we denote
\begin{align*}
& J_{\scT}\big(x;(\Th_1(\cd),v_1(\cd)),(\Th_2(\cd),v_2(\cd))\big)
\deq J_{\scT}\big(x;\Th_1(\cd)X(\cd)+v_1(\cd),\Th_2(\cd)X(\cd)+v_2(\cd)\big)\\
&\q=\dbE\int_0^T\bigg[\Blan\!
    \begin{pmatrix}Q   & \!\!S_1^\top & \!\!S_2^\top \\
                   S_1 & \!\!R_{11}   & \!\!R_{12}   \\
                   S_2 & \!\!R_{21}   & \!\!R_{22}   \end{pmatrix}\!\!
    \begin{pmatrix}X(t) \\ \Th_1(t)X(t)+v_1(t) \\ \Th_2(t)X(t)+v_2(t)\end{pmatrix}\!,\!
    \begin{pmatrix}X(t) \\ \Th_1(t)X(t)+v_1(t) \\ \Th_2(t)X(t)+v_2(t)\end{pmatrix}\!\Bran\\
&\q\hp{=\ } +2\Blan\!\begin{pmatrix} q \\ r_1 \\ r_2 \end{pmatrix}\!,\!
            \begin{pmatrix}X(t)\\ \Th_1(t)X(t)+v_1(t)\\ \Th_2(t)X(t)+v_2(t)\end{pmatrix}\!\Bran\bigg]dt.
\end{align*}
The above amounts to saying that the controls take the following forms:
$$ u_i(t)=\Th_i(t)X(t)+v_i(t),\q t\in[0,T]; \q i=1,2. $$
Clearly, the above controls are anti-participating, meaning they do not rely on future information.
Consequently, they are practically feasible, in principle.
Now, a four-tuple $(\bar\Th_{1,\scT}(\cd),\bar v_{1,\scT}(\cd)$;
$\bar\Th_{2,\scT}(\cd),\bar v_{2,\scT}(\cd))$ is called a {\it closed-loop saddle strategy}
of Problem (DG)$_{\scT}$, if for any initial state $x\in\dbR^n$ and any
$(\Th_i(\cd),v_i(\cd))\in L^\i(0,T;\dbR^{m_i\times n})\times\sU_i[0,T]$; $i=1,2$, the following holds:
\begin{align*}
J_{\scT}\big(x;(\bar\Th_{1,\scT}(\cd),\bar v_{1,\scT}(\cd)),(\Th_{2,\scT}(\cd),v_{2,\scT}(\cd))\big)
&\les J_{\scT}\big(x;(\bar\Th_{1,\scT}(\cd),\bar v_{1,\scT}(\cd)),(\bar\Th_{2,\scT}(\cd),\bar v_{2,\scT}(\cd))\big)\\
&\les J_{\scT}\big(x;(\Th_1(\cd),v_1(\cd)),(\bar\Th_{2,\scT}(\cd),\bar v_{2,\scT}(\cd))\big).
\end{align*}

\ss

Recently, extensive research has been conducted on LQ stochastic differential games
and their various extensions in the literature.
In \cite{Mou-Yong2006}, Mou--Yong approached Problem (DG)$_{\scT}$ from an open-loop
perspective using the Hilbert space method.
Sun--Yong \cite{Sun-Yong2014} characterized open-loop and closed-loop saddle strategies
for Problem (DG)$_{\scT}$ and established their certain properties.
Subsequently, Sun \cite{Sun2021} furthered the study of Problem (DG)$_{\scT}$, revealing
fundamental properties of this class of games and illustrating differences between stochastic and deterministic cases. Bardi--Priuli \cite{Bardi-Priuli2014} delved into ergodic nonzero-sum LQ stochastic differential games with $N$ players, while Duncan \cite{Duncan2014} investigated a class
of zero-sum LQ stochastic differential games with the noise process being an arbitrary
square-integrable stochastic process with continuous sample paths.
Sun--Yong--Zhang \cite{Sun-Yong-Zhang2016} tackled the infinite horizon version of
Problem (DG)$_{\scT}$, followed by additional work of Li--Shi--Yong \cite{Li-Shi-Yong2021} incorporating
mean-field terms.
Moon \cite{Moon2021,Moon2023} explored LQ stochastic leader-follower Stackelberg differential
games for jump-diffusion and Markov jump-diffusion systems.
Yu--Zhang--Zhang \cite{Yu-Zhang-Zhang2022} extended the framework of Problem (DG)$_{\scT}$
to include Poisson jumps.
Additionally, numerous other works on LQ differential games exist,
including \cite{Hamadene1998,Hamadene1999} on nonzero-sum LQ games
and \cite{Bensoussan-Sung-Yam-Yung2016,Caines-Huang2021} on mean-field
LQ games for large-population systems.

\ms

In this paper, we will assume certain conditions that guarantee that for any given
time horizon $[0,T]$, $u_1(\cd)\mapsto J_{\scT}(x;u_1(\cd),u_2(\cd))$ is uniformly
convex and $u_2(\cd)\mapsto J_{\scT}(x;u_1(\cd),u_2(\cd))$ is uniformly concave.
In this case, it is known the following facts:
(i) For any given initial state, Problem (DG)$_{\scT}$ admits a unique open-loop
saddle strategy;
(ii) Problem (DG)$_{\scT}$ admits a unique closed-loop saddle strategy,
determined by the solution to the corresponding differential Riccati equation,
and a terminal value problem of ordinary differential equation (ODE, for short);
(iii) The outcome of the closed-loop saddle strategy corresponding to an initial
state is the open-loop saddle strategy (for that given initial state);
and (iv) For a given initial state, the open-loop saddle strategy admits a unique
closed-loop representation, and it coincides with the outcome of the closed-loop
saddle strategy corresponding to that initial state. See Sun-Yong \cite{Sun-Yong2020book-b} for details.

\ms

From the above-mentioned facts, we see that for the open-loop saddle strategy at any initial state, one has its closed-loop representation, or regarded it as the outcome of the closed-loop saddle
strategy (corresponding to the initial state).
Thus, the open-loop saddle strategy can always be thought of initial state dependent and time
anti-participating.

\ms

In this paper, we are going to investigate the asymptotic behavior of Problem (DG)$_{\scT}$ as $T\to\i$.
It turns out that under appropriate conditions, the optimal triple of Problem (DG)$_{\scT}$ exhibits the
so-called {\it exponential turnpike property}.
Specifically, let $(\bX(\cd),\bu(\cd),\bv(\cd))$ be the optimal triple of Problem (DG)$_{\scT}$
for the initial state $x$, with $\mu_{\scT}(t;x)$, $\nu_{\sc1,\scT}(t;x)$, and $\nu_{\sc2,\scT}(t;x)$
representing the corresponding distributions of $\bX(t)$, $\bu(t)$, and $\bv(t)$, respectively.
The exponential turnpike property asserts the existence of unique probability distributions
$\mu^*$, $\nu_1^*$, and $\nu_2^*$ (on $\dbR^m$, $\dbR^{m_1}$, and $\dbR^{m_2}$, respectively),
independent of $x$ and $T$, such that for some constants $K,\l>0$,
\begin{equation}\label{fenbu-TP}\begin{aligned}
& d(\mu_{\scT}(t;x),\mu^*) + d(\nu_{\sc1,\scT}(t;x),\nu_1^*) + d(\nu_{\sc2,\scT}(t;x),\nu_2^*) \\
&\q \les K(|x|^2+1)\[e^{-\l t}+e^{-\l(T-t)}\], \q\forall t\in[0,T],
\end{aligned}\end{equation}
where $d$ is the Wasserstein distance on the set of distributions (see later for a definition).
We have misused the notation a little here, since the random variables could be valued in the space
of different dimensions, which can be identified from the context.
In what follows, $K,\l>0$ are generic constants which could be different from line to line.

\ms

Let $\k\in(0,1)$ be an arbitrary number. Then inequality \rf{fenbu-TP} implies that
$$ d(\mu_{\scT}(t;x),\mu^*) + d(\nu_{1,\scT}(t;x),\nu_1^*) + d(\nu_{2,\scT}(t;x),\nu_2^*)
\les 2K(|x|^2+1)e^{-\l\k T}, \q\forall t\in[\k T,(1-\k)T]. $$
Since $K$ and $\l$ are independent of $T$, when the time horizon $[0,T]$ is very large,
the distributions of the optimal triple $(\bX(\cd),\bu(\cd),\bv(\cd))$ remain very close
to the time-invariant and $x$-independent distributions $\mu^*$, $\nu_1^*$, and $\nu_2^*$, respectively, for most of $[0,T]$. Consequently, we could use $\mu^*$ as the initial distribution of the state equation and $(\nu_1^*,\nu_2^*)$ as the law of the control processes of the players to approximately solve Problem (DG)$_{\scT}$.
Additionally, we will show that the invariant distribution $\mu^*$ is determined by a stable SDE, or it can be constructed by solving a stationary Fokker-Planck equation, according to the classical theory of SDEs, and $\nu_1^*$ and $\nu_2^*$ are determined by $\mu^*$.

\ms

The turnpike property, initially realized by Ramsey \cite{Ramsey1928} and
von Neumann \cite{Neumann1945} in the early part of the last century, originated from
the study of optimal solutions to dynamic optimization problems with an infinite time
horizon in the context of economic growth. The term ``turnpike" was coined by Dorfman,
Samuelson, and Solow \cite{Dorfman-Samuelson-Solow1958} in 1958, inspired by a similar
feature observed in toll highways in the United States.
In recent years, significant progress has been made in addressing deterministic optimal
control problems, as evidenced by studies such as
\cite{Porretta-Zuazua2013,Damm-Grune-Stieler-Worthmann2014,Trelat-Zuazua2015,Zaslavski2019,
Trelat-Zhang2018,Grune-Guglielmi2018,Lou-Wang2019,Breiten-Pfeiffer2020,Sakamoto-Zuazua2021}
and the references cited therein. However, in the realm of stochastic optimal control problems,
the investigation of corresponding turnpike properties is relatively nascent.
To the best of our knowledge, the work of \cite{Sun-Wang-Yong2022} marked the first
attempt to uncover such properties for LQ stochastic optimal control problems,
followed by a more comprehensive and generalized study presented in \cite{Sun-Yong2024}.

\ms

As previously mentioned, this paper aims to investigate the turnpike property for
zero-sum LQ stochastic differential games.
In this context, the primary contributions and challenges of our paper can be
summarized as follows:

\ms

(i) Establishing the turnpike property requires analyzing the behavior of the solution
$P_{\scT}(t)$ to the associated differential Riccati equation as the time horizon $T$
tends to infinity. In the optimal control case \cite{Sun-Wang-Yong2022,Sun-Yong2024},
under the stabilizability condition of the state equation over $[0,\i)$, positive
definiteness conditions are imposed on the weighting coefficients of the performance
functional, which implies the monotonicity of $T\mapsto P_{\scT}(t)$.
Now, such a monotonicity is lost due to the opposing roles played by the weighting
coefficients for the two players. This poses significant challenges, and we have to
seek a different approach.

\ms

(ii) The exponential turnpike property \rf{fenbu-TP} is established for the zero-sum
LQ stochastic differential game in the distributional sense.
Since in reality people primarily care about the distributions of stochastic processes,
this result suggests that we may use $(\nu_1^*,\nu_2^*)$ as an approximate solution
to Problem (DG)$_{\scT}$, which is more convenient since it is independent of time and
the initial state.
Thus, our result offers more potential applications in practical settings.

\ms

(iii) As a byproduct and an intermediate step in the analysis of the turnpike
property for Problem (DG)$_{\scT}$, we solve the infinite horizon version of
Problem (DG)$_{\scT}^{\scp0}$, denoted by Problem (DG)$_{\scp\i}^{\scp0}$,
under the stability condition. We prove that the associated algebraic Riccati equation is uniquely solvable, under our assumed conditions,
ensuring that Problem (DG)$_{\scp\i}^{\scp0}$ is both open-loop and closed-loop
solvable, and that the open-loop saddle strategy admits a closed-loop representation.
Recall that in \cite{Sun-Yong-Zhang2016} (see also \cite{Sun-Yong2020book-b}),
it was only proved the equivalence of closed-loop solvability and the algebraic
Riccati equation's solvability. No sufficient conditions were given for the
closed-loop solvability of Problem (DG)$_{\scp\i}^{\scp0}$.

\ms

The rest of the paper is organized as follows. In \autoref{Sec:Pre}, we introduce some frequently used notation and present some preliminary results. Problem (DG)$_{\scp\i}^{\scp0}$ is introduced in \autoref{Sec:Ihorizon-game}, together with some fundamental analysis on the problem. In \autoref{Sec:Ric}, we investigate the asymptotic behavior of the solution $P_{\scT}(\cd)$ to the differential Riccati equation associated with Problem (DG)$_{\scT}$. Finally, we establish the exponential turnpike property for Problem (DG)$_{\scT}$ in \autoref{Sec:TP}.

\section{Preliminaries}\label{Sec:Pre}

We start by introducing some frequently used notation.
Let $\dbR^{n\times m}$ be the space of $n\times m$ real matrices equipped with
the Frobenius inner product. Denote by $\dbS^n$ the subspace of $\dbR^{n\times n}$
consisting of symmetric matrices and by $\dbS^n_+$ the subset of $\dbS^n$ consisting
of positive definite matrices.
For $\dbS^n$-valued functions $M(\cd)$ and $N(\cd)$, we write $M(\cd)\ges N(\cd)$
(respectively, $M(\cd)>N(\cd)$) if $M(\cd)-N(\cd)$ is positive semidefinite
(respectively, positive definite) almost everywhere with respect to the Lebesgue measure.
The identity matrix of size $n$ is denoted by $I_n$ (or simply $I$  when no confusion arises),
and a vector is always considered as a column vector unless otherwise specified.
For a Euclidian space $\dbH$ (which could be $\dbR^n$, $\dbR^{n\times m}$, etc.),
we define (recalling that $\f(\cd)\in\dbF$ means that the process $\f(\cd)$ is progressively
measurable with respect to $\dbF$)
\begin{align*}
& C([0,T];\dbH)\deq\Big\{\f:[0,T]\to\dbH\bigm|\f~\hb{is continuous}\Big\},\\
& L^\i(0,T;\dbH)\deq\Big\{\f:[0,T]\to\dbH \bigm|\f~\hb{is Lebesgue measurable
and essentially bounded}\Big\}, \\
& L_{\cF_{\scT}}^2(\Om;\dbH)\deq\Big\{\xi:\Om\to\dbH\bigm|\xi~\hb{is $\cF_{\scT}$-measurable and}
~\dbE|\xi|^2<\i\Big\},\\
& L_{\dbF}^2(0,T;\dbH)\deq\Big\{\f:[0,T]\times\Om\to\dbH\bigm|\f(\cd)\in\dbF~\hb{and}~
\dbE\int_0^T|\f(t)|^2dt<\i\Big\},\\
& L_{\dbF}^2(0,\i;\dbH)\deq\Big\{\f:[0,\i)\times\Om\to\dbH\bigm|\f\in\dbF ~\hb{and}~
\dbE\int_0^\i|\f(t)|^2dt<\i\Big\}.
\end{align*}
In what follows, we will denote $m=m_1+m_2$, and
\begin{align}\label{Notation:BDSR}
B=(B_1,B_2), \q D=(D_1,D_2), \q S=\begin{pmatrix}S_1 \\ S_2\end{pmatrix}, \q
R=\begin{pmatrix}R_{11} & R_{12} \\ R_{21}& R_{22}\end{pmatrix},\q
r=\begin{pmatrix}r_1 \\ r_2\end{pmatrix}.
\end{align}
Also, we let $\lan\,\cd\,,\cd\,\ran$ be the inner product in various spaces which can be
identified from the context. Next, we introduce the following hypothesis.

\ms

{\bf(A1)} There exists a constant $\d>0$ such that for every $T>0$,
\begin{equation}\label{convex-concave}\left\{\begin{aligned}
& J_{\scT}^{\scp0}(0;u_1(\cd),0)\ges\d\,\dbE\int_0^T|u_1(t)|^2dt,  ~&\forall u_1(\cd)\in\sU_1[0,T], \\
& J_{\scT}^{\scp0}(0;0,u_2(\cd))\les-\d\,\dbE\int_0^T|u_2(t)|^2dt, ~&\forall u_2(\cd)\in\sU_2[0,T].
\end{aligned}\right.\end{equation}

\ss

For convenience, we refer to \rf{convex-concave} as the {\it uniform convexity/concavity condition}.
The following result essentially is taken from \cite{Sun2021} and \cite{Sun-Yong2020book-b}.

\begin{theorem}\label{thm:uT-rep}
Let {\rm(A1)} hold. Then the following hold:

\ms

{\rm(i)} For any initial state $x\in\dbR^n$, Problem (DG)$_{\scT}$ has a unique
open-loop saddle strategy.

\ms

{\rm(ii)} Let $\bar u_{\scT}(\cd)\equiv\begin{pmatrix}\bu(\cd) \\ \bv(\cd)\end{pmatrix}
\in\sU_1[0,T]\times\sU_2[0,T]$. Let $\bX(\cd)$ be the corresponding state process with
initial state $x$ and $(\bY(\cd),\bZ(\cd))$ the adapted solution to the following backward
stochastic differential equation (BSDE, for short):
\begin{equation}\label{BSDE}\left\{\begin{aligned}
d\bY(t) &= -[A^\top\bY(t)+C^\top\bZ(t)+Q\bX(t)+S^\top\bar u_{\scT}(t)+q]dt + \bZ(t)dW(t), \\
 \bY(T) &= 0.
\end{aligned}\right.\end{equation}
Then $(\bu(\cd),\bv(\cd))$ is the unique open-loop saddle strategy of Problem (DG)$_{\scT}$
for $x$ if and only if the following stationarity condition holds:
\begin{equation}\label{cdn:pingwen}
B^\top\bY(t)+D^\top\bZ(t)+S\bX(t)+R\bar u_{\scT}(t)+r=0, \q\ae~t\in[0,T],~\as
\end{equation}

\ss

{\rm(iii)} Problem (DG)$_{\scT}$ has a unique closed-loop saddle strategy
$(\bar\Th_{\scT}(\cd),\bar v_{\scT}(\cd))$.

\ms

{\rm(iv)} The following differential Riccati equation
\begin{equation}\label{Ric:game}\left\{\begin{aligned}
& \dot P_{\scT} + P_{\scT}A + A^\top P_{\scT} + C^\top P_{\scT}C + Q \\
&\hp{P_{\scT}} -(P_{\scT}B + C^\top P_{\scT}D + S^\top)(R + D^\top P_{\scT}D)^{-1}
                (B^\top P_{\scT} + D^\top P_{\scT}C + S)=0, \\
& P_{\scT}(T)=0
\end{aligned}\right.\end{equation}
admits a unique {\it strongly regular solution} $P_{\scT}(\cd)\in C([0,T];\dbS^n)$,
which means that for some constant $\a>0$,
\begin{align}\label{R+DPD}
(-1)^{i+1}[R_{ii}+D_i^{\top}P_{\scT}(t)D_i] \ges \a I, \q\forall t\in[0,T];\q i=1,2,
\end{align}
(implicitly implying the invertibility of the matrix $R+D^\top P_{\scT}(t)D$) and
that \rf{Ric:game} holds.

\ms

{\rm(v)} Let $P_{\scT}(\cd)\in C([0,T];\dbS^n)$ be the strongly regular solution of
\rf{Ric:game} and set
\begin{align}\label{def:barTh}
\bar\Th_{\scT}(t) \deq -[R+D^\top P_{\scT}(t)D]^{-1}[B^\top P_{\scT}(t)+D^\top P_{\scT}(t)C+S],
\q t\in[0,T].
\end{align}
The following terminal value problem of an ODE admits a unique solution $\f_{\scT}(\cd)$:
\begin{equation}\label{ODE:phi}\left\{\begin{aligned}
& \dot\f_{\scT}(t) + [A\!+\!B\bar\Th_{\scT}(t)]^\top\f_{\scT}(t)
  + [C\!+\!D\bar\Th_{\scT}(t)]^\top P_{\scT}(t)\si + \bar\Th_{\scT}(t)^\top r + P_{\scT}(t)b + q =0, \\
&\f_{\scT}(T)=0.
\end{aligned}\right.\end{equation}
If we set
\begin{align}\label{def:vT}
\bar v_{\scT}(t)\deq-[R+D^\top P_{\scT}(t)D]^{-1}[B^\top\f_{\scT}(t)+D^\top P_{\scT}(t)\si+r], \q t\in[0,T],
\end{align}
then the unique closed-loop saddle strategy of Problem (DG)$_{\scT}$ is given by
$(\bar\Th_{\scT}(\cd),\bar v_{\scT}(\cd))$.

\ms

{\rm(vi)} For any initial state $x\in\dbR^n$, let $\bX(\cd)$ be the solution
of the state equation \rf{state} under the state feedback
$$ \bar u_{\scT}(t) \equiv \begin{pmatrix}\bar u_{1,\scT}(t) \\ \bar u_{2,\scT}(t)\end{pmatrix}
\deq \bar\Th_{\scT}(t)\bX(t)+\bar v_{\scT}(t), \q t\in[0,T], $$
which is called the outcome of $(\bar\Th_{\scT}(\cd),\bar v_{\scT}(\cd))$ corresponding to $x$.
Then $\bar u_{\scT}(\cd)$ is the open-loop saddle strategy Problem (DG)$_{\scT}$ for $x$.
\end{theorem}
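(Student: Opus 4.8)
The plan is to obtain the open-loop results (i)--(ii) by a Hilbert-space variational argument, and then to derive the closed-loop results (iii), (v)--(vi) from the solvability of the Riccati equation in (iv), which I regard as the technical core.

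First I would prove (i) and (ii). For a fixed $x\in\dbR^n$, the map $(u_1,u_2)\mapsto J_{\scT}(x;u_1,u_2)$ is a continuous quadratic functional on the Hilbert space $\sU_1[0,T]\times\sU_2[0,T]$, and since $u\mapsto X(\cd\,;x,u)$ is affine and bounded, its purely quadratic part agrees with $J_{\scT}^{\scp0}(0;\cd\,,\cd)$. Hence (A1) makes $u_1\mapsto J_{\scT}$ uniformly convex and $u_2\mapsto J_{\scT}$ uniformly concave; in particular $J_{\scT}$ is coercive in the minimizing variable and anticoercive in the maximizing one. A standard saddle-point theorem for convex--concave functionals then yields existence, and the strict convexity/concavity yields uniqueness, giving (i). For (ii) I would compute the G\^ateaux derivatives of $J_{\scT}$ in $u_1$ and in $u_2$; introducing the adjoint pair $(\bY,\bZ)$ through the BSDE \rf{BSDE} rewrites the simultaneous vanishing of both derivatives as the single stationarity condition \rf{cdn:pingwen}, which by the same convexity/concavity is both necessary and sufficient.

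The heart of the matter is (iv). I would first show that (A1) forces the strict sign conditions $R_{11}\ges\d I$ and $R_{22}\les-\d I$: testing \rf{convex-concave} with controls concentrated on a shrinking interval $[T-h,T]$ makes the induced state of order $\sqrt h$, so that after dividing by $h$ and letting $h\to0$ only the terms $\dbE\int_0^T\lan R_{ii}u_i,u_i\ran dt$ survive, forcing these signs. Since a symmetric block matrix with positive-definite $(1,1)$-block and negative-definite $(2,2)$-block is invertible (by a Schur-complement computation its Schur complement onto the second block is negative definite), the matrix $R+D^\top P_{\scT}D$ is automatically invertible wherever \rf{R+DPD} holds; thus \rf{Ric:game} is locally solvable backward from $P_{\scT}(T)=0$ with \rf{R+DPD} valid near $T$. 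The main obstacle is to continue this solution to all of $[0,T]$ without a finite escape time and while preserving \rf{R+DPD}. I would handle this by identifying $P_{\scT}(\t)$ with the value matrix of the homogeneous subgame on $[\t,T]$ and exploiting the completion-of-squares identity below: because the coefficients are time-invariant and (A1) holds for every horizon with the same $\d$, the uniform convexity/concavity constants of every subgame are bounded below by $\d$, the diagonal blocks of $R+D^\top P_{\scT}D$ govern these constants and hence cannot degenerate, and a matching a priori bound on $P_{\scT}$ excludes blow-up.

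Granting (iv), parts (iii) and (v) follow by completion of squares. With $\bar\Th_{\scT}$ and $\f_{\scT}$ as in \rf{def:barTh} and \rf{ODE:phi}, I would apply It\^o's formula to $t\mapsto\lan P_{\scT}(t)X(t),X(t)\ran+2\lan\f_{\scT}(t),X(t)\ran$ along the closed-loop state and integrate, obtaining
\begin{align*}
J_{\scT}\big(x;(\Th_1,v_1),(\Th_2,v_2)\big)
=\G(x)+\dbE\int_0^T\blan(R+D^\top P_{\scT}D)\z,\z\bran\,dt,
\end{align*}
where $\z\deq(\Th-\bar\Th_{\scT})X+(v-\bar v_{\scT})$ (with $\Th,v$ the stacked strategies), $\G(x)$ is independent of the strategy, and \rf{ODE:phi} together with \rf{def:vT} is exactly what cancels the residual linear terms. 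Because \rf{R+DPD} makes this quadratic form convex in the player-1 direction and concave in the player-2 direction, the right-hand side is minimized over $(\Th_1,v_1)$ and maximized over $(\Th_2,v_2)$ at the unique point $(\bar\Th_{\scT},\bar v_{\scT})$, proving that $(\bar\Th_{\scT},\bar v_{\scT})$ is the unique closed-loop saddle strategy. Finally, for (vi) I would form the outcome $\bar u_{\scT}=\bar\Th_{\scT}\bX+\bar v_{\scT}$ and verify, via It\^o's formula together with \rf{Ric:game} and \rf{ODE:phi}, that $\bY\deq P_{\scT}\bX+\f_{\scT}$ and the associated $\bZ\deq P_{\scT}[(C+D\bar\Th_{\scT})\bX+D\bar v_{\scT}+\si]$ solve \rf{BSDE} and satisfy \rf{cdn:pingwen}; by the characterization in (ii), $\bar u_{\scT}$ is then the open-loop saddle strategy for $x$, and the uniqueness from (i) shows the open-loop and closed-loop constructions coincide.
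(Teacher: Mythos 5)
The paper does not actually prove this theorem: it is imported wholesale from \cite{Sun2021} and \cite{Sun-Yong2020book-b} (``The following result essentially is taken from\dots''), so there is no in-paper argument to measure you against. Your outline is, in substance, the route those references take: a Hilbert-space variational characterization for (i)--(ii) (uniform convexity/concavity of the quadratic functional, G\^ateaux derivatives expressed through the adjoint BSDE \rf{BSDE} to get \rf{cdn:pingwen}), and then strong regularity of the Riccati equation plus completion of squares for (iii), (v), (vi). Your localization argument at $[T-h,T]$ for $R_{11}\ges\d I$, $R_{22}\les-\d I$ is correct precisely because the terminal weight is zero and the spike's state contributes only $O(h)\|u\|^2$ to the running cost and $O(\sqrt h)\|u\|^2$ to the cross term; the Schur-complement observation for invertibility of $R+D^\top P_{\scT}D$ is also right, and your verification in (vi) that $\bY=P_{\scT}\bX+\f_{\scT}$, $\bZ=P_{\scT}[(C+D\bar\Th_{\scT})\bX+D\bar v_{\scT}+\si]$ satisfies \rf{BSDE} and \rf{cdn:pingwen} is the standard closing move.

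The one place where your proposal is a sketch rather than a proof is the global continuation in (iv), and that is exactly where the cited references spend their effort. Saying that ``the diagonal blocks of $R+D^\top P_{\scT}D$ govern the convexity/concavity constants of the subgames and hence cannot degenerate'' is circular as stated: relating $R_{11}+D_1^\top P_{\scT}D_1$ to the convexity constant of player 1's subproblem requires freezing player 2 at the closed-loop saddle feedback, which is the object being constructed. The standard way to break the circle is the comparison with the two \emph{single-player} Riccati equations obtained by deleting one control: under (A1) each of these is strongly regular by the control-theoretic result \cite[Theorem 2.5.6]{Sun-Yong2020book-a}, and one then proves $P_{\scp1T}(t)\les P_{\scT}(t)\les P_{\scp2T}(t)$, which yields both the a priori bound on $P_{\scT}$ and the uniform bounds $R_{11}+D_1^\top P_{\scT}D_1\ges\d I$, $R_{22}+D_2^\top P_{\scT}D_2\les-\d I$ simultaneously. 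This is precisely the content the paper records as \autoref{prpo:comparison} (quoting \cite[Proposition 4.7]{Sun2021}). If you want your proof of (iv) to be self-contained, you should replace the ``cannot degenerate'' sentence with this comparison argument; everything else in your outline then goes through.
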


\section{The Infinite Horizon Problem}\label{Sec:Ihorizon-game}

In establishing the turnpike property for Problem (DG)$_{\scT}$, a crucial step
is to demonstrate the exponential convergence of the solution $P_{\scT}(\cd)$
to the differential Riccati equation \rf{Ric:game} as $T\to\i$.
Note that in the current case, as mentioned in the introduction, we do no have
the monotonicity of $T\mapsto P_{\scT}(\cd)$.
Thus, the approach used for optimal control problems like in \cite{Sun-Wang-Yong2022}
and \cite{Sun-Yong2024} do not apply.
To tackle this challenge, we need to carefully investigate Problem (DG)$_{\scp\i}^{\scp0}$,
and conduct some fundamental analysis, making full use of the structure of the problem. We now carry out this.

\ms

Consider the following state equation, i.e., \rf{state} with $b=\si=0$, and $T=\i$:
\begin{equation}\label{state:IH}\left\{\begin{aligned}
dX(t)&= [AX(t)\1n+\1n B_1u_1(t)\1n+\1n B_2u_2(t)]dt
        +[CX(t)\1n+\1n D_1u_1(t)\1n+\1n D_2u_2(t)]dW(t), \q t\ges0, \\
 X(0)&= x
\end{aligned}\right.\end{equation}
and the following quadratic performance functional over $[0,\i)$, i.e., in \rf{J_T}, $r=0$ and $q=0$:
\begin{align}\label{cost:IH}
J_{\scp\i}^{\scp0}(x;u_1(\cd),u_2(\cd)) \deq \dbE\int_0^\i \Blan\!\!
   \begin{pmatrix}Q   & \!\!S_1^\top & \!\!S_2^\top \\
                  S_1 & \!\!R_{11}   & \!\!R_{12}   \\
                  S_2 & \!\!R_{21}   & \!\!R_{22}   \end{pmatrix}\!\!
   \begin{pmatrix}X(t) \\ u_1(t) \\ u_2(t)\end{pmatrix}\!,\!
   \begin{pmatrix}X(t) \\ u_1(t) \\ u_2(t)\end{pmatrix}\!\!\Bran dt.
\end{align}
Similar to \rf{sU[0,T]}, we can define $\sU_i[0,\i)$, $i=1,2$,
and $\sU[0,\i)\deq\sU_1[0,\i)\times\sU_2[0,\i)$. We call a control pair
$$(u_1(\cd),u_2(\cd))\in\sU_1[0,\i)\times\sU_2[0,\i),$$
{\it admissible} for the initial state $x$ if the corresponding state process
$X(\cd\,;x,u_1(\cd),u_2(\cd))$ is square-integrable over $[0,\i)$, i.e.,
$$ \dbE\int_0^\i|X(t;x,u_1(\cd),u_2(\cd))|^2dt<\i. $$
Clearly, the performance functional \rf{cost:IH} is well-defined for such control pairs.
We denote the set of admissible control pairs for the initial state $x$ by $\sU_{ad}(x)$.

\ms

In general, $\sU_{ad}(x)$ depends on the initial state $x$ and is only a subset of
$\sU_1[0,\i)\times\sU_2[0,\i)$. However, it can be shown that
\begin{align}\label{U=U}
\sU_{ad}(x)=\sU_1[0,\i)\times\sU_2[0,\i), \q\forall x\in\dbR^n,
\end{align}
under the {\it $L^2$-stability} condition, which we now recall.

\begin{definition}
The system
$$dX(t)=AX(t)dt+CX(t)dW(t),$$
denoted by $[A,C]$, is called {\it $L^2$-stable} if its solution $X(\cd\,;x)$
with initial state $x$ satisfies
\begin{align}\label{L2-stable}
\dbE\int_0^\i|X(t;x)|^2dt<\i, \q\forall x\in\dbR^n.
\end{align}
\end{definition}

We now present the following result, from which it is not difficult to see
that \rf{U=U} holds when the system $[A,C]$ is $L^2$-stable.

\begin{proposition}\label{prop:SDE-bound}
Suppose that $[A,C]$ is $L^2$-stable. Then there exist constants $K,\l>0$ such that
for any $b(\cd),\si(\cd)\in L^2_\dbF(0,\i;\dbR^n)$ and any $x\in\dbR^n$, the solution
$X(\cd)\equiv X(\cd\,;x)$ to the SDE
$$\left\{\begin{aligned}
dX(t) &= [AX(t)+b(t)]dt+[CX(t)+\si(t)]dW(t), \q t\ges0,\\
 X(0) &= x
\end{aligned}\right.$$
satisfies the following estimates:
\begin{align}
\dbE|X(t)|^2 &\les K\lt[e^{-\l t}|x|^2+\dbE\int_0^t\(|b(s)|^2+|\si(s)|^2\)ds\rt],
\q\forall t\ges0, \label{Es:1} \\
\dbE\int_0^t|X(s)|^2ds &\les K\lt[|x|^2+\dbE\int_0^t\(|b(s)|^2+|\si(s)|^2\)ds\rt],
\q\forall t\ges0. \label{Es:2}
\end{align}
\end{proposition}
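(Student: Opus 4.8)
The plan is to base everything on a Lyapunov function extracted from the $L^2$-stability hypothesis. The key fact I would use is that the $L^2$-stability of $[A,C]$ is equivalent to the existence of a matrix $P\in\dbS^n_+$ solving the algebraic Lyapunov equation
\begin{equation*}
A^\top P+PA+C^\top PC=-I.
\end{equation*}
One can either invoke this classical characterization or construct $P$ directly: by $L^2$-stability, the function $x\mapsto\dbE\int_0^\i|X(t;x)|^2dt$ (where $X(\cd\,;x)$ solves the homogeneous system $[A,C]$) is finite; since $X(t;x)$ is linear in $x$, it is a quadratic form in $x$, manifestly symmetric and nonnegative, and it is in fact positive definite because $\dbE|X(t;x)|^2\to|x|^2$ as $t\downarrow0$. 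Hence it equals $\lan Px,x\ran$ for a unique $P\in\dbS^n_+$. The time-homogeneity of the flow together with Dynkin's formula then shows $-\lan(A^\top P+PA+C^\top PC)x,x\ran=|x|^2$ for all $x$, which is exactly the displayed equation. This step concentrates all of the stochastic-stability content of the proposition and is where I expect the main work to lie; the remaining steps are routine.

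With such a $P$ in hand, I would apply It\^o's formula to $t\mapsto\lan PX(t),X(t)\ran$ for the inhomogeneous equation. Because only $X\in L^2_\dbF(0,t;\dbR^n)$ is guaranteed (standard well-posedness, since $b,\si\in L^2_\dbF$), the stochastic integral is a priori only a local martingale, so I would localize by the stopping times $\tau_k\deq\inf\{s\ges0:|X(s)|\ges k\}$, take expectations on $[0,t\wedge\tau_k]$, and pass to the limit by Fatou/monotone convergence. Using the Lyapunov equation, the drift collapses and yields
\begin{equation*}
\frac{d}{dt}\dbE\lan PX(t),X(t)\ran
=\dbE\Big[-|X(t)|^2+2\lan PX(t),b(t)\ran+2\lan PCX(t),\si(t)\ran+\lan P\si(t),\si(t)\ran\Big].
\end{equation*}
Estimating the last three terms by Young's inequality and absorbing a fraction of $|X|^2$ gives
\begin{equation*}
\frac{d}{dt}\dbE\lan PX(t),X(t)\ran\les-\tfrac12\dbE|X(t)|^2+K\,\dbE\big[|b(t)|^2+|\si(t)|^2\big],
\end{equation*}
for a constant $K$ depending only on $P$ and $C$.

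Finally I would extract the two estimates from this differential inequality. Integrating on $[0,t]$ and discarding the nonnegative term $\dbE\lan PX(t),X(t)\ran\ges0$ (this is where $P>0$ enters) gives $\tfrac12\dbE\int_0^t|X(s)|^2ds\les\lan Px,x\ran+K\,\dbE\int_0^t(|b|^2+|\si|^2)ds$, which is \rf{Es:2} after using $\lan Px,x\ran\les\|P\|\,|x|^2$. For \rf{Es:1}, I would bound $\dbE|X|^2\ges\|P\|^{-1}\dbE\lan PX,X\ran$ to convert the differential inequality into $g'(t)\les-\l g(t)+h(t)$ with $g(t)=\dbE\lan PX(t),X(t)\ran$, $\l=(2\|P\|)^{-1}$, and $h(t)=K\,\dbE[|b(t)|^2+|\si(t)|^2]$; Gronwall's inequality with integrating factor $e^{\l t}$ (using $e^{-\l(t-s)}\les1$) gives $g(t)\les e^{-\l t}\lan Px,x\ran+\int_0^th(s)ds$, and converting back via $\l_{\min}(P)|X|^2\les\lan PX,X\ran$ yields \rf{Es:1}. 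Since $P$, and hence $K$ and $\l$, depend only on $A$ and $C$, the constants are uniform in $x,b,\si$, as required.
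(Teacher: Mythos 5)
Your proposal is correct and follows essentially the same route as the paper: both extract a Lyapunov matrix $P\in\dbS^n_+$ with $PA+A^\top P+C^\top PC=-cI$ from the $L^2$-stability hypothesis (the paper cites \cite[Theorem 3.2.3]{Sun-Yong2020book-a}), apply It\^o's formula to $\lan PX(t),X(t)\ran$, absorb the cross terms by Young's inequality, and conclude with Gronwall. The only cosmetic difference is that for \rf{Es:2} you integrate the differential inequality directly and drop the nonnegative terminal term, whereas the paper integrates the pointwise exponential bound; both are valid and equally short.
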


\begin{proof}
Since $[A,C]$ is $L^2$-stable, by \cite[Theorem 3.2.3]{Sun-Yong2020book-a},
there exists a positive definite matrix $P\in\dbS^n_+$ such that
$$ PA + A^\top P + C^\top PC + 2I_n = 0. $$
Applying It\^o's rule to $t\mapsto\lan PX(t),X(t)\ran$, we obtain
\begin{align*}
{d\over dt}\dbE\lan PX(t),X(t)\ran
&=\dbE\[\lan(PA+A^\top P+C^\top PC)X(t),X(t)\ran\\
&\hp{=\ } +2\lan Pb(t)+C^\top P\si(t),X(t)\ran + \lan P\si(t),\si(t)\ran\] \\
&=\dbE\[-2|X(t)|^2+2\lan Pb(t)+C^\top P\si(t),X(t)\ran+\lan P\si(t),\si(t)\ran\].
\end{align*}
Let $\g>0$ be the largest eigenvalue of $P$ and set
$$ \l\deq\g^{-1}, \q \a(t) \deq Pb(t)+C^\top P\si(t), \q \b(t) \deq \lan P\si(t),\si(t)\ran; \q t\ges 0. $$
Then we have
\begin{align*}
& -|X(t)|^2\les-\l\lan PX(t),X(t)\ran,\\
& 2\lan Pb(t)+C^\top P\si(t),X(t)\ran\les |X(t)|^2+|\a(t)|^2,
\end{align*}
and hence
\begin{align*}
{d\over dt}\dbE\lan PX(t),X(t)\ran
&\les\dbE\[-|X(t)|^2+|\a(t)|^2+\b(t)\] \\
&\les-\l\dbE\lan PX(t),X(t)\ran+\dbE\big[|\a(t)|^2+\b(t)\big].
\end{align*}
By Gronwall's inequality in differential form,
\begin{align*}
\dbE\lan PX(t),X(t)\ran
&\les \lan Px,x\ran e^{-\l t} +\int_0^te^{-\l(t-s)}\dbE\[|\a(s)|^2+\b(s)\]ds\\
&\les |P|e^{-\l t}|x|^2 + K\dbE\int_0^t\[|b(s)|^2+|\si(s)|^2\]ds, \q\forall t\ges0,
\end{align*}
and
\begin{align*}
\dbE\int_0^t\lan PX(s),X(s)\ran ds
&\les|P||x|^2\int_0^te^{-\l s}ds + K\int_0^t\int_0^se^{-\l(s-r)}\dbE\[|b(r)|^2+|\si(r)|^2\]drds \\
&\les K\(|x|^2+\dbE\int_0^t\[|b(s)|^2+|\si(s)|^2\]ds\),\q\forall t\ges0.
\end{align*}
Since $P>0$, the desired estimates can be readily obtained.
\end{proof}

As a consequence of \autoref{prop:SDE-bound}, the following result provides
an equivalent statement for the $L^2$-stability, commonly referred to as the
{\it mean-square exponential stability} of $[A,C]$.

\begin{corollary}\label{crllry:stability}
System $[A,C]$ is $L^2$-stable if and only if there exist constants $K,\l>0$ such that
\begin{align}\label{e-stable}
\dbE|X(t;x)|^2 \les K e^{-\l t}|x|^2, \q\forall t\ges 0,~\forall x\in\dbR^n.
\end{align}
\end{corollary}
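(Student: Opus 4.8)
The plan is to prove the two implications separately, the substantive one being a direct specialization of \autoref{prop:SDE-bound}.

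For the sufficiency direction (the ``if'' part: that \rf{e-stable} forces $L^2$-stability), I would simply integrate the exponential estimate in time. Since $|X(t;x)|^2\ges0$, Tonelli's theorem permits interchanging expectation and the time integral, so that
$$\dbE\int_0^\i|X(t;x)|^2dt=\int_0^\i\dbE|X(t;x)|^2dt\les K|x|^2\int_0^\i e^{-\l t}dt={K\over\l}\,|x|^2<\i,\q\forall x\in\dbR^n.$$
This is exactly the $L^2$-stability condition \rf{L2-stable}, so this direction needs no further work.

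For the necessity direction (the ``only if'' part: that $L^2$-stability yields the exponential bound), I would invoke \autoref{prop:SDE-bound} directly. The homogeneous system $[A,C]$ is precisely the SDE treated there with the forcing terms $b(\cd)\equiv0$ and $\si(\cd)\equiv0$. With these choices the driving-term integrals in \rf{Es:1} vanish, and that estimate collapses to
$$\dbE|X(t;x)|^2\les Ke^{-\l t}|x|^2,\q\forall t\ges0,$$
with the constants $K,\l>0$ supplied by the proposition, uniformly in $x$. This is exactly \rf{e-stable}.

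Thus the only real content has already been carried out in \autoref{prop:SDE-bound}: once its Lyapunov-function argument (solving $PA+A^\top P+C^\top PC+2I_n=0$ and applying It\^o's rule together with Gronwall's inequality) has produced the exponentially weighted bound \rf{Es:1}, the equivalence of $L^2$-stability and mean-square exponential stability is immediate. I do not expect any genuine obstacle here; the only point meriting a word of care is the measure-theoretic justification for swapping $\dbE$ and $\int_0^\i$ in the sufficiency direction, which is legitimate precisely because the integrand $|X(t;x)|^2$ is nonnegative.
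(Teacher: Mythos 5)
Your proposal is correct and follows essentially the same route as the paper: the paper also notes that \rf{e-stable} trivially implies \rf{L2-stable} by integration, and obtains the converse by applying \rf{Es:1} with $b(\cd)=\si(\cd)=0$. Your added remark on Tonelli's theorem merely makes explicit a step the paper treats as obvious.
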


\begin{proof}
Clearly, \rf{e-stable} implies \rf{L2-stable}.
On the other hand, if $[A,C]$ is $L^2$-stable, applying \rf{Es:1} to the case
$b(\cd),\si(\cd)=0$ yields the mean-square exponential stability of $[A,C]$.
\end{proof}

In our game setting over the interval $[0,\i)$, the performance functional needs to
be well-defined, which is essential for ensuring the well-formulation of
Problem (DG)$_{\scp\i}^{\scp0}$, and exhibit uniform convexity in $u_1(\cd)$ and uniform concavity in $u_2(\cd)$, which, corresponds to (A1), guarantees the existence of a unique open-loop saddle strategy. In order to satisfy the former condition, any admissible control pair must consist of two parts: The first part is to cooperatively stabilize the state equation (assuming the homogeneous state equation is stabilizable). The second part involves choosing suitable controls from $\sU_i[0,\i)$ to achieve the goal of minimization/maximization. To avoid additional technicalities, we directly assume that the state system is stable to ensure that the performance functional is well-defined over $\sU_1[0,\i)\times\sU_2[0,\i)$, rather than requiring stabilizability as assumed in the optimal control case. Therefore, we introduce the following hypothesis.

\ms

{\bf(A2)} System $[A,C]$ is $L^2$-stable.

\ms

Similar to the finite horizon case, we introduce the notion of open-loop saddle strategy
for Problem (DG)$_{\scp\i}^{\scp0}$.

\begin{definition}
A control pair $(\bar u_1(\cd),\bar u_2(\cd))\in\sU_1[0,\i)\times\sU_2[0,\i)$ is called
an {\it open-loop saddle strategy} of Problem (DG)$_{\scp\i}^{\scp0}$ for the initial
state $x$ if
\begin{align*}
J(x;\bar u_1(\cd),u_2(\cd)) \les J(x;\bar u_1(\cd),\bar u_2(\cd))\les J(x;u_1(\cd),\bar u_2(\cd)),\\
\forall (u_1(\cd),u_2(\cd))\in\sU_1[0,\i)\times\sU_2[0,\i).
\end{align*}
\end{definition}

To establish the uniqueness and existence of an open-loop saddle strategy for Problem (DG)$_{\scp\i}^{\scp0}$,
let us consider, for $i=1,2$, the following SDE:
\begin{equation}\label{SDE:Xi}\left\{\begin{aligned}
dX_i(t) &=[AX_i(t) + B_iu_i(t)]dt + [CX_i(t) + D_iu_i(t)]dW(t), \q t\in[0,T],  \\
 X_i(0) &= 0.
\end{aligned}\right.\end{equation}
Since $[A,C]$ is $L^2$-stable, by \autoref{prop:SDE-bound}, there exists a constant $K>0$,
independent of $T$, such that the solution $X_i(\cd)$ of \rf{SDE:Xi} satisfies
\begin{equation}\label{Bound:Xi}
\dbE\int_0^T|X_i(t)|^2dt \les K \dbE\int_0^T|u_i(t)|^2dt, \q\forall u_i(\cd)\in\sU_i[0,T],\q\forall T>0.
\end{equation}
By the linearity of \rf{SDE:Xi}, the linear operators
$$ \cL_{i,\scT}:\sU_i[0,T]\to L_{\dbF}^2(0,T;\dbR^n),  \q
\wh\cL_{i,\scT}:\sU_i[0,T]\to L_{\cF_T}^2(\Om;\dbR^n); \q i=1,2, $$
defined by
\begin{equation}\label{def:cL}
[\cL_{\sc i,\scT}u_i](\cd)\deq X_i(\cd), \q\wh\cL_{\sc i,\scT} u_i \deq X_i(T); \q i=1,2,
\end{equation}
where $X_i(\cd)$ is the solution of \rf{SDE:Xi} corresponding to $u_i(\cd)$, are bounded
uniformly in $T$. Similarly, the linear operators
$$\cN_{\scT}:\dbR^n\to L_\dbF^2(0,T;\dbR^n),\q\wh\cN_{\scT}:\dbR^n\to L_{\cF_T}^2(\Om;\dbR^n)$$
defined by
$$[\cN_{\scT}x](\cd)\deq X_0(\cd), \q\wh\cN_{\scT}x\deq X_0(T),$$
where $X_0(\cd)$ is the solution of
$$\left\{\begin{aligned}
dX_0(t) &= AX_0(t)dt + CX_0(t)dW(t), \q t\in[0,T],\\
 X_0(0) &= x,
\end{aligned}\right.$$
are also bounded, uniformly in $T$. Further, it is easily seen that the solution of
$$\left\{\begin{aligned}
dX(t) &= [AX(t)+B_1u_1(t)+B_2u_2(t)]dt + [CX(t)+D_1u_1(t)+D_2u_2(t)]dW(t), \q t\in[0,T],\\
 X(0) &= x
\end{aligned}\right.$$
can be decomposed into
$$ X(\cd)=X_0(\cd)+X_1(\cd)+X_2(\cd)=[\cN_{\scT}x](\cd)+[\cL_{1,\scT}u_1](\cd)+[\cL_{2,\scT}u_2](\cd), $$
and in particular,
$$ X(T) =X_0(T)+X_1(T)+X_2(T) =\wh\cN_{\scT}x+\wh\cL_{1,\scT}u_1+\wh\cL_{2,\scT}u_2. $$
Denote by $\cA^*$ the adjoint operator of a linear operator $\cA$. Then the performance functional
$$J_{\scT}^{\scp0}(x;u_1(\cd),u_2(\cd)) \deq \dbE\int_0^T \Blan\!\!
   \begin{pmatrix}Q   & \!\!S_1^\top & \!\!S_2^\top \\
                  S_1 & \!\!R_{11}   & \!\!R_{12}   \\
                  S_2 & \!\!R_{21}   & \!\!R_{22}   \end{pmatrix}\!\!
   \begin{pmatrix}X(t) \\ u_1(t) \\ u_2(t)\end{pmatrix}\!,\!
   \begin{pmatrix}X(t) \\ u_1(t) \\ u_2(t)\end{pmatrix}\!\!\Bran dt $$
can be represented as follows:
\begin{equation}\label{JT:rep}
J_{\scT}^{\scp0}(x;u_1(\cd),u_2(\cd)) = \lan\cM_{\scT}u,u\ran + 2\lan\cK_{\scT}x,u\ran + \lan\cO_{\scT}x,x\ran,
\end{equation}
where
\begin{equation}\label{def-M}
\cM_{\scT}\deq \begin{pmatrix}\cM_{11,\scT}&\cM_{12,\scT} \\ \cM_{21,\scT}&\cM_{22,\scT}\end{pmatrix}, \q
\cK_{\scT}\deq \begin{pmatrix}\cK_{1,\scT} \\ \cK_{2,\scT}\end{pmatrix}, \q
\cO_{\scT}\deq \cN_{\scT}^*Q\cN_{\scT}, \q
    u(\cd)\deq \begin{pmatrix}u_1(\cd) \\ u_2(\cd)\end{pmatrix},
\end{equation}
with
\begin{align}
\cM_{ij,\scT}
&\deq R_{ij}+S_i\cL_{j,\scT}+\cL_{j,\scT}^*S_i^\top+\cL_{i,\scT}^*Q\cL_{j,\scT}; \q i,j=1,2, \label{def-Mij}\\
\cK_{i,\scT}
&\deq \cL_{i,\scT}^*Q\cN_{\scT}+S_i\cN_{\scT}; \q i=1,2. \label{def-K}
\end{align}
Note that the linear operators
$$\cM_{\scT}:\sU[0,\i)\to\sU[0,\i),
\q\cK_{\scT}:\dbR^n\to\sU[0,\i),
\q\cO_{\scT}\in\dbS^n $$
are all bounded uniformly in $T$, and $\cM_{\scT}$ is self-adjoint.

\ms

Similar to the previous discussion, replacing the interval $[0,T]$ by $[0,\i)$,
we can derive a similar operator representation for the performance functional \rf{cost:IH}:
\begin{equation}\label{J-infty:rep}
J_{\scp\i}^{\scp0}(x;u_1(\cd),u_2(\cd)) = \lan\cM u,u\ran + 2\lan\cK x,u\ran + \lan\cO x,x\ran,
\end{equation}
where $\cO\in\dbS^n$, and the linear operators
$$\cM\deq\begin{pmatrix}\cM_{11}&\cM_{12} \\ \cM_{21}&\cM_{22}\end{pmatrix}: \sU[0,\i)\to\sU[0,\i),\q
  \cK\deq\begin{pmatrix}\cK_1 \\ \cK_2\end{pmatrix}: \dbR^n\to\sU[0,\i) $$
are bounded with $\cM$ being self-adjoint.

\ms

Observe that condition (A1) is equivalent to the uniform positivity of
$\cM_{11,\scT}$ and $-\cM_{22,\scT}$, meaning there exists a constant $\d>0$,
independent of $T$, such that
$$ \lan\cM_{11,\scT}u_1,u_1\ran\ges \d\|u_1(\cd)\|^2,
\q \lan\cM_{22,\scT}u_2,u_2\ran\les-\d\|u_2(\cd)\|^2 $$
for all $u_i(\cd)\in\sU_i[0,T]$; $i=1,2$. Furthermore, condition (A1) implies that
$$\lan\cM_{11}u_1,u_1\ran\ges \d\|u_1(\cd)\|^2,
\q\lan\cM_{22}u_2,u_2\ran\les-\d\|u_2(\cd)\|^2 $$
for all $u_i(\cd)\in\sU_i[0,\i)$; $i=1,2$.

\ms

With the functional representation \rf{J-infty:rep} and the above analysis,
we derive the following uniqueness and existence result.

\begin{theorem}\label{thm:Ginfty-optimality}
Let {\rm(A1)--(A2)} hold. Then for each initial state $x$, Problem (DG)$_{\scp\i}^{\scp0}$
has a unique open-loop saddle strategy. Moreover, a control pair $\bar u(\cd)\equiv
\begin{pmatrix}\bar u_1(\cd) \\ \bar u_2(\cd)\end{pmatrix} \in\sU_1[0,\i)\times\sU_2[0,\i)$
is the saddle strategy if and only if
\begin{equation}\label{FBSDE-kehua}
B^\top\bar Y(t)+D^\top\bar Z(t)+S\bar X(t)+R\bar u(t) = 0, \q\ae~t\in[0,\i),~\as,
\end{equation}
where $(\bar Y(\cd),\bar Z(\cd))$ is the $L^2$-{\it stable adapted solution}\footnote{See the
appendix of \cite{Sun-Yong2020book-a} for the definition and unique existence of such a kind
of solution.} to the following linear BSDE over $[0,\i)$:
\begin{align}\label{BSDE-infty}
d\bar Y(t) = -\lt[A^{\top}\bar Y(t)+C^{\top}\bar Z(t)+Q\bar X(t)+S^{\top}\bar u(t)\rt]dt + \bar Z(t)dW(t),
\end{align}
where $\bar X(\cd)$ is the state process in \rf{state:IH} corresponding to $\bar u(\cd)$.
\end{theorem}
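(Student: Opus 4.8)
The plan is to treat Problem (DG)$_{\scp\i}^{\scp0}$ through its operator representation \rf{J-infty:rep}, which exhibits it as a convex-concave quadratic game on the Hilbert space $\sU[0,\i)=\sU_1[0,\i)\times\sU_2[0,\i)$ (note that by (A2) and \rf{U=U} every pair in $\sU[0,\i)$ is admissible). First I would reduce the saddle problem to a single linear operator equation. Writing $J_{\scp\i}^{\scp0}(x;u)=\lan\cM u,u\ran+2\lan\cK x,u\ran+\lan\cO x,x\ran$ with $\cM$ self-adjoint, a direct differentiation shows that the partial gradients are $2(\cM_{11}u_1+\cM_{12}u_2+\cK_1x)$ and $2(\cM_{21}u_1+\cM_{22}u_2+\cK_2x)$, so that a pair $\bar u=(\bar u_1,\bar u_2)^\top$ is a critical point precisely when
$$\cM\bar u+\cK x=0.$$
Because (A1) gives $\lan\cM_{11}u_1,u_1\ran\ges\d\|u_1\|^2$ and $\lan\cM_{22}u_2,u_2\ran\les-\d\|u_2\|^2$, the map $u_1\mapsto J_{\scp\i}^{\scp0}(x;u_1,u_2)$ is uniformly convex and $u_2\mapsto J_{\scp\i}^{\scp0}(x;u_1,u_2)$ uniformly concave. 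Hence at any critical point $\bar u$ the first condition makes $\bar u_1$ the global minimizer of $u_1\mapsto J_{\scp\i}^{\scp0}(x;u_1,\bar u_2)$ and the second makes $\bar u_2$ the global maximizer of $u_2\mapsto J_{\scp\i}^{\scp0}(x;\bar u_1,u_2)$; these are exactly the two saddle inequalities. Thus being a saddle point is equivalent to being a critical point.

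Next I would establish existence and uniqueness by showing $\cM$ is boundedly invertible, so that $\bar u=-\cM^{-1}\cK x$ is the unique solution. Since $\cM_{11}\ges\d I$ is invertible and $\cM_{21}=\cM_{12}^*$, I would pass to the Schur complement $\cM_{22}-\cM_{12}^*\cM_{11}^{-1}\cM_{12}$; here $\cM_{12}^*\cM_{11}^{-1}\cM_{12}\ges0$ while $\cM_{22}\les-\d I$, so the Schur complement is $\les-\d I$ and in particular invertible. The standard block triangular factorization of $\cM$ then yields its invertibility, giving a unique critical point and hence, by the previous paragraph, a unique open-loop saddle strategy.

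Finally, the BSDE characterization \rf{FBSDE-kehua}: I would identify the abstract stationarity $\cM\bar u+\cK x=0$ with the pointwise condition by an adjoint computation. Let $\bar X$ be the state for $\bar u$, let $X^v$ solve the variational equation $dX^v=(AX^v+Bv)dt+(CX^v+Dv)dW$ with $X^v(0)=0$ for an arbitrary direction $v\in\sU[0,\i)$, and let $(\bar Y,\bar Z)$ be the $L^2$-stable adapted solution of \rf{BSDE-infty}. Applying It\^o's rule to $t\mapsto\lan\bar Y(t),X^v(t)\ran$, the $A^\top$- and $C^\top$-terms cancel against the drift and diffusion of $X^v$, and after integrating and letting $T\to\i$ one obtains
$$\ts\frac{1}{2}\frac{d}{d\th}\Big|_{\th=0}J_{\scp\i}^{\scp0}(x;\bar u+\th v)=\dbE\int_0^\i\lan B^\top\bar Y+D^\top\bar Z+S\bar X+R\bar u,v\ran\,dt=\lan\cM\bar u+\cK x,v\ran.$$
As $v$ is arbitrary, $\cM\bar u+\cK x=0$ holds if and only if \rf{FBSDE-kehua} holds, closing the equivalence.

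The hard part will be justifying this infinite-horizon It\^o computation rigorously, in particular showing that the boundary term $\dbE\lan\bar Y(T),X^v(T)\ran$ vanishes as $T\to\i$ and that all the integrals converge. This is exactly where (A2) enters: by \autoref{prop:SDE-bound} and \autoref{crllry:stability} both $\bar X$ and $X^v$ belong to $L^2_\dbF(0,\i;\dbR^n)$, while the $L^2$-stability of the adapted solution (the footnoted existence theorem) places $(\bar Y,\bar Z)$ in $L^2_\dbF(0,\i;\dbR^n)\times L^2_\dbF(0,\i;\dbR^n)$ and forces $\dbE|\bar Y(T)|^2\to0$ along a subsequence, which kills the boundary contribution. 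The finite-horizon analogue in \autoref{thm:uT-rep} sidesteps this because there $\bar Y(T)=0$ identically; over $[0,\i)$ the needed decay must instead be extracted from the $L^2$-stability rather than from a terminal condition.
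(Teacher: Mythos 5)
Your proposal follows essentially the same route as the paper's proof: reduce the saddle condition to the stationarity equation $\cM\bar u+\cK x=0$ via the operator representation and the uniform convexity/concavity from (A1), invert $\cM$ through the Schur complement $\cM_{22}-\cM_{21}\cM_{11}^{-1}\cM_{12}\les-\d\,\cI$ to get existence and uniqueness, and then translate the stationarity condition into \rf{FBSDE-kehua}. The only difference is that you spell out the final duality computation (the It\^o argument and the vanishing of $\dbE\lan\bar Y(T),X^v(T)\ran$ via $L^2$-stability, cf.\ \autoref{remark:EY2go0}) which the paper dismisses as ``a straightforward computation,'' and your treatment of that step is correct.
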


\begin{proof} By definition, a pair $(\bar u_1(\cd),\bar u_2(\cd))$ is an open-loop
saddle strategy if and only if
\begin{equation}\label{Jinifty:saddle}\begin{aligned}
J_{\scp\i}^{\scp0}(x;\bar u_1(\cd),\bar u_2(\cd)+\e v_2(\cd))
\les J_{\scp\i}^{\scp0}(x;\bar u_1(\cd),\bar u_2(\cd))
\les J_{\scp\i}^{\scp0}(x;\bar u_1(\cd)+\e v_1(\cd),\bar u_2(\cd)), \\
\forall\e\in\dbR,~\forall (v_1(\cd),v_2(\cd))\in\sU_1[0,\i)\times\sU_2[0,\i).
\end{aligned}\end{equation}
The operator representation \rf{J-infty:rep} of the performance functional \rf{cost:IH}
can be rewritten as
\begin{align*}
J^{\scp0}_{\scp\i}(x;u_1(\cd),u_2(\cd))
&= \lan\cM_{11}u_1,u_1\ran + \lan\cM_{22}u_2,u_2\ran + 2\lan u_1,\cM_{12}u_2\ran \\
&\hp{=\ } +2\lan u_1,\cK_1x\ran + 2\lan u_2,\cK_2x\ran + \lan\cO x,x\ran.
\end{align*}
In terms of this representation, \rf{Jinifty:saddle} is equivalent to
$$\left\{\begin{aligned}
\e^2\lan\cM_{11}v_1,v_1\ran + 2\e\lan v_1,\cM_{11}\bar u_1 + \cM_{12}\bar u_2+\cK_1x\ran \ges0,
\q\forall\e\in\dbR,~\forall v_1(\cd)\in\sU_1[0,\i); \\
\e^2\lan\cM_{22}v_2,v_2\ran + 2\e\lan v_2,\cM_{22}\bar u_2 + \cM_{21}\bar u_1+\cK_2x\ran \les0,
\q\forall\e\in\dbR,~\forall v_2(\cd)\in\sU_2[0,\i).
\end{aligned}\right.$$
The above is in turn equivalent to
\begin{equation}\label{suanzi-kehua}\left\{\begin{aligned}
\cM_{11}\bar u_1+\cM_{12}\bar u_2+\cK_1x=0, \\
\cM_{22}\bar u_2+\cM_{21}\bar u_1+\cK_2x=0,
\end{aligned}\right.\end{equation}
since for any $v_1(\cd)\in\sU_1[0,\i)$ and any $v_2(\cd)\in\sU_2[0,\i)$,
$$ \lan\cM_{11}v_1,v_1\ran\ges0, \q \lan\cM_{22}v_2,v_2\ran\les0. $$
Note that the operator $\cM$ is invertible, with the inverse $\cM^{-1}$ given by
\begin{align*}
\cM^{-1}=\begin{pmatrix}
\cM_{11}^{-1}+[\cM_{11}^{-1}\cM_{12}]\F^{-1}[\cM_{11}^{-1}\cM_{12}]^* & -[\cM_{11}^{-1}\cM_{12}]\F^{-1} \\
-\F^{-1}[\cM_{11}^{-1}\cM_{12}]^* & \F^{-1}\end{pmatrix},
\end{align*}
where $\F\deq \cM_{22}-\cM_{21}\cM_{11}^{-1}\cM_{12}$ is a negative (and hence invertible) operator.
This shows that Problem (DG)$_{\scp\i}^{\scp0}$ has a unique open-loop saddle strategy, given by
$$\begin{pmatrix}\bar u_1(\cd) \\ \bar u_2(\cd)\end{pmatrix}
= -\begin{pmatrix}\cM_{11}&\cM_{12} \\ \cM_{21}&\cM_{22}\end{pmatrix}^{-1}
   \begin{pmatrix}\cK_1x \\ \cK_2x \end{pmatrix}. $$
If we substitute the integral form of the performance functional \rf{cost:IH} instead of
the operator representation \rf{J-infty:rep} into \rf{Jinifty:saddle}, then a straightforward
computation reveals that the criterion \rf{suanzi-kehua} transforms into \rf{FBSDE-kehua}.
\end{proof}

\begin{remark}\label{remark:EY2go0}
Suppose that the control pair $\bar u(\cd)$ in \autoref{thm:Ginfty-optimality}
is an open-loop saddle strategy for the initial state $x$. Since by \autoref{prop:SDE-bound},
$$ \f(\cd)\deq Q\bar X(\cd)+S^{\top}\bar u(\cd) \in L_{\dbF}^2(0,\i;\dbR^n), $$
we conclude from (A.2.7) in the proof of \cite[Proposition A.2.3]{Sun-Yong2020book-a}
that the $L^2$-stable adapted solution of \rf{BSDE-infty} has the following property:
\begin{align}\label{EY2go0}
\lim_{t\to\i}\dbE|\bar Y(t)|^2 = 0.
\end{align}
\end{remark}

The following result further establishes a convergence of Problem (DG)$_{\scT}^{\scp0}$
to Problem (DG)$_{\scp\i}^{\scp0}$ in a suitable sense.
Such a result essentially implies the convergence of the solution to the differential
Riccati equation \rf{Ric:game}.

\begin{proposition}\label{prop:uT-go-u}
Let {\rm(A1)--(A2)} hold. For a given initial state $x$, let $(\bar u_{1,\scT}(\cd),\bar u_{2,\scT}(\cd))$
and $(\bar u_1(\cd),\bar u_2(\cd))$ be the open-loop saddle strategies of Problem (DG)$_{\scT}^{\scp0}$
and Problem (DG)$_{\scp\i}^{\scp0}$, respectively. Then
\begin{equation}\label{uT-go-u}
\lim_{T\to\i}\dbE\int_0^T\[|\bar u_{1,\scT}(t)-\bar u_1(t)|^2+|\bar u_{2,\scT}(t)-\bar u_2(t)|^2\]dt=0.
\end{equation}
Consequently, for the corresponding state processes $\bX(\cd)$ and $\bar X(\cd)$, we have
\begin{equation}\label{XT-go-X}
\lim_{T\to\i}\dbE\int_0^T|\bar X_{\scT}(t)-\bar X(t)|^2dt=0.
\end{equation}
\end{proposition}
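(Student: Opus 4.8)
The plan is to work entirely with the operator representations of the two saddle strategies and to reduce the whole statement to the decay of $\dbE|\bar Y(t)|^2$ recorded in \autoref{remark:EY2go0}. Exactly as in the proof of \autoref{thm:Ginfty-optimality}, the finite-horizon saddle strategy is characterized by the operator equation $\cM_{\scT}\bar u_{\scT}+\cK_{\scT}x=0$ (the finite-horizon analogue of \rf{suanzi-kehua}), while the infinite-horizon one satisfies $\cM\bar u+\cK x=0$. Moreover, since (A1) gives $\cM_{11,\scT}\ges\d I$ and $\cM_{22,\scT}\les-\d I$ uniformly in $T$, the Schur complement $\F_{\scT}=\cM_{22,\scT}-\cM_{21,\scT}\cM_{11,\scT}^{-1}\cM_{12,\scT}\les-\d I$, and the block-inverse formula displayed in the proof of \autoref{thm:Ginfty-optimality} shows that $\cM_{\scT}$ is boundedly invertible with $\|\cM_{\scT}^{-1}\|\les K$ for a constant $K$ independent of $T$.

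Writing $\bar u|_{[0,T]}$ for the restriction of $\bar u$ to $[0,T]$ (an element of $\sU[0,T]$), I set $e_{\scT}:=\bar u_{\scT}-\bar u|_{[0,T]}$ and $r_{\scT}:=\cM_{\scT}(\bar u|_{[0,T]})+\cK_{\scT}x$. Subtracting the two optimality equations gives $\cM_{\scT}e_{\scT}=-r_{\scT}$, whence $\|e_{\scT}\|\les K\|r_{\scT}\|$; note that the left-hand side of \rf{uT-go-u} is precisely $\|e_{\scT}\|^2$, so everything reduces to proving $\|r_{\scT}\|\to0$. Computing the residual by the standard first-variation-plus-duality identity (differentiate $J_{\scT}^{\scp0}(x;\,\cd\,)$ and eliminate the state-adjoint pairing against the finite-horizon BSDE), one gets, for every $v\in\sU[0,T]$,
$$\lan r_{\scT},v\ran=\dbE\int_0^T\lan v,\,B^\top Y+D^\top Z+S\bar X+R\bar u\ran\,dt,$$
where $(Y,Z)$ solves on $[0,T]$ the backward equation driven by $Q\bar X+S^\top\bar u$ with $Y(T)=0$, and I have used that the state produced on $[0,T]$ by $\bar u|_{[0,T]}$ from $x$ is exactly $\bar X|_{[0,T]}$.

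The key observation is that on $[0,T]$ the pair $(\bar Y,\bar Z)$ from \rf{BSDE-infty} satisfies the same backward equation as $(Y,Z)$ but with terminal value $\bar Y(T)$ in place of $0$; hence $(\bar Y-Y,\bar Z-Z)$ solves a linear BSDE with zero driver and terminal value $\bar Y(T)$, and the standard stability estimate gives $\dbE\int_0^T(|\bar Y-Y|^2+|\bar Z-Z|^2)\,dt\les K\,\dbE|\bar Y(T)|^2$. Invoking the infinite-horizon stationarity \rf{FBSDE-kehua}, which makes $B^\top\bar Y+D^\top\bar Z+S\bar X+R\bar u$ vanish, the bracket above collapses to $B^\top(Y-\bar Y)+D^\top(Z-\bar Z)$, and Cauchy--Schwarz yields $\|r_{\scT}\|\les K(\dbE|\bar Y(T)|^2)^{1/2}$. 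By \rf{EY2go0} this tends to $0$ as $T\to\i$, which proves \rf{uT-go-u}. Finally, \rf{XT-go-X} is immediate: on $[0,T]$ the difference $\bar X_{\scT}-\bar X$ is the solution of the state SDE driven by the control $e_{\scT}$ with zero initial value, so estimate \rf{Es:2} of \autoref{prop:SDE-bound} bounds $\dbE\int_0^T|\bar X_{\scT}-\bar X|^2\,dt$ by $K\|e_{\scT}\|^2\to0$.

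The main obstacle is the third paragraph: the finite- and infinite-horizon problems live on different intervals, so $\cM_{\scT}$ is not a compression of $\cM$ and $\cK_{\scT}x$ is not a compression of $\cK x$ (both differ by tail contributions from the homogeneous dynamics beyond $T$). The identity for $\lan r_{\scT},v\ran$ is what packages all of these tail effects into the single quantity $B^\top(Y-\bar Y)+D^\top(Z-\bar Z)$, and recognizing that this difference is governed by the terminal mismatch $\bar Y(T)$—which decays thanks to the $L^2$-stability of the infinite-horizon adjoint—is the crux. The uniform-in-$T$ bound $\|\cM_{\scT}^{-1}\|\les K$, needed to pass from $\|r_{\scT}\|\to0$ to $\|e_{\scT}\|\to0$, is the other point requiring care, but it is a direct consequence of the uniform convexity/concavity in (A1) via the Schur-complement estimate.
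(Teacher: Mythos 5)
Your argument is correct and is essentially the paper's own proof in different packaging: the paper identifies $\bar u(\cd)-\bar u_{\scT}(\cd)$ as the saddle strategy of an auxiliary game with terminal cost $2\lan\bar Y(T),X(T)\ran$ and reads off $\hu_{\scT}=-\cM_{\scT}^{-1}\wh\cL_{\scT}^*\bar Y(T)$, which is exactly your identity $\cM_{\scT}e_{\scT}=-r_{\scT}$ with $r_{\scT}=\wh\cL_{\scT}^*\bar Y(T)$ (your BSDE-difference computation of $\lan r_{\scT},v\ran$ is the duality pairing $\dbE\lan\bar Y(T),\wh\cL_{\scT}v\ran$ unwound). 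Both proofs then rest on the same two pillars --- the uniform bound $\|\cM_{\scT}^{-1}\|\les K$ via the Schur complement under (A1), and $\dbE|\bar Y(T)|^2\to0$ from \autoref{remark:EY2go0} --- and deduce \rf{XT-go-X} from \autoref{prop:SDE-bound}; the only point worth making explicit is that the $T$-uniformity of your BSDE stability constant relies on the $L^2$-stability in (A2).
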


\begin{proof}
Let $\bar u_{\scT}(\cd)\deq\begin{pmatrix}\bar u_{1,\scT}(\cd) \\ \bar u_{2,\scT}(\cd)\end{pmatrix}$
and $\bar u(\cd)\deq\begin{pmatrix}\bar u_1(\cd) \\ \bar u_2(\cd)\end{pmatrix}$.
By \autoref{thm:uT-rep}(ii), the adapted solution $(\bX(\cd),\bY(\cd),\bZ(\cd))$
of the forward-backward SDE (FBSDE, for short)
$$\left\{\begin{aligned}
d\bX(t) & =  [A\bX(t)+B\bar u_{\scT}(t)]dt + [C\bX(t)+D\bar u_{\scT}(t)] dW(t), \\
d\bY(t) & = -[A^{\top}\bY(t)+C^{\top}\bZ(t)+Q\bX(t)+S^{\top}\bar u_{\scT}(t)]dt + \bZ(t)dW(t), \\
 \bX(0) & = x, \q \bY(T)=0
\end{aligned}\right.$$
satisfies
$$ B^\top\bY(t)+D^\top\bZ(t)+S\bX(t)+R\bar u_{\scT}(t) = 0, \q\ae~t\in[0,T],~\as$$
On the other hand, by \autoref{thm:Ginfty-optimality},
$$ B^\top\bar Y(t)+D^\top\bar Z(t)+S\bar X(t)+R\bar u(t) = 0, \q\ae~t\in[0,\i),~\as$$
where $\bar X(\cd)$ is the solution of
$$\left\{\begin{aligned}
d\bar X(t) & =[A\bar X(t)+B\bar u(t)]dt + [C\bar X(t)+D\bar u(t)] dW(t), \q t\in[0,\i), \\
 \bar X(0) & =x,
\end{aligned}\right.$$
and $(\bar Y(\cd),\bar Z(\cd))$ is the $L^2$-stable adapted solution of
$$ d\bar Y(t) = -[A^{\top}\bar Y(t)+C^{\top}\bar Z(t)+Q\bar X(t)+S^{\top}\bar u(t)]dt
 + \bar Z(t)dW(t), \q t\in[0,\i).$$
Define for $t\in[0,T]$,
\begin{align*}
& \hu_{\scT}(t)= \begin{pmatrix}\hu_{1,\scT}(t) \\ \hu_{2,\scT}(t)\end{pmatrix}
            \deq \bar u(t)-\bar u_{\scT}(t)
               = \begin{pmatrix}\bar u_1(t)-\bar u_{1,\scT}(t) \\ \bar u_2(t)-\bar u_{2,\scT}(t)\end{pmatrix}, \\
& \wh X_{\scT}(t)  \deq \bar X(t)-\bar X_{\scT}(t), \q
 {\wh Y}_{\scT}(t) \deq \bar Y(t)-\bar Y_{\scT}(t), \q
 {\wh Z}_{\scT}(t) \deq \bar Z(t)-\bar Z_{\scT}(t).
\end{align*}
Then on the interval $[0,T]$,
\begin{equation}\label{FBSDE:hX}\left\{\begin{aligned}
& d{\wh X}_{\scT}(t) = [A{\wh X}_{\scT}(t)+B\hu_{\scT}(t)]dt+[C{\wh X}_{\scT}(t)+D\hu_{\scT}(t)]dW(t),\\
& d{\wh Y}_{\scT}(t) = -[A^{\top}{\wh Y}_{\scT}(t)+C^{\top}{\wh Z}_{\scT}(t)+Q{\wh X}_{\scT}(t)
                       + S^{\top}\hu_{\scT}(t)]dt + {\wh Z}_{\scT}(t)dW(t), \\
&  {\wh X}_{\scT}(0) = 0, \q{\wh Y}_{\scT}(T) = \bar Y(T),
\end{aligned}\right.\end{equation}
and the following holds:
\begin{equation}\label{pingwei:hX}
B^\top{\wh Y}_{\scT}(t)+D^\top{\wh Z}_{\scT}(t)+S{\wh X}_{\scT}(t)+R\hu_{\scT}(t)=0, \q\ae~t\in[0,T],~\as
\end{equation}
Again, by \autoref{thm:uT-rep}(ii), we see from \rf{FBSDE:hX}--\rf{pingwei:hX}
that $\hu_{\scT}(\cd)$ is the (unique) open-loop saddle strategy for the initial state $x=0$ of the zero-sum stochastic LQ differential game with the state equation
$$\left\{\begin{aligned}
dX(t) &= [AX(t)+B_1u_1(t)+B_2u_2(t)]dt + [CX(t)+D_1u_1(t)+D_2u_2(t)]dW(t), \q t\in[0,T],\\
 X(0) &= x
\end{aligned}\right.$$
and the performance functional
\begin{align*}
\wh J_{\scT}(x;u_1(\cd),u_2(\cd)) &\deq \dbE\bigg[2\lan\bar Y(T),X(T)\ran + \!\int_0^T\!\Blan\!\!
   \begin{pmatrix}Q   & \!\!S_1^\top & \!\!S_2^\top \\
                  S_1 & \!\!R_{11}   & \!\!R_{12}   \\
                  S_2 & \!\!R_{21}   & \!\!R_{22}   \end{pmatrix}\!\!
   \begin{pmatrix}X(t) \\ u_1(t) \\ u_2(t)\end{pmatrix}\!,\!
   \begin{pmatrix}X(t) \\ u_1(t) \\ u_2(t)\end{pmatrix}\!\!\Bran dt\bigg].
\end{align*}
In terms of the operator $\cM_{\scT}$ defined in \rf{def-M} and the operator
$$ \wh\cL_{\scT} \deq (\wh\cL_{1,\scT},\wh\cL_{2,\scT}), $$
where $\wh\cL_{1,\scT}$ and $\wh\cL_{2,\scT}$ are defined in \rf{def:cL},
we can represent $\wh J_{\scT}(0;u_1(\cd),u_2(\cd))$ as
$$\wh J_{\scT}(0;u_1(\cd),u_2(\cd))
= \lan\cM_{\scT} u,u\ran + 2\lan \wh\cL_{\scT}^*\bar Y(T),u\ran. $$
Form this representation it is easily seen that the saddle strategy $\hu_{\scT}(\cd)$ is given by
$$ \hu_{\scT}(\cd) = -\cM_{\scT}^{-1}\wh\cL_{\scT}^*\bar Y(T). $$
Observe that
$$
\cM_{\scT}^{-1}=\begin{pmatrix}
\cM_{11,\scT}^{-1}+[\cM_{11,\scT}^{-1}\cM_{12,\scT}]\F_{\scT}^{-1}[\cM_{11,\scT}^{-1}
\cM_{12,\scT}]^*
&-[\cM_{11,\scT}^{-1}\cM_{12,\scT}]\F_{\scT}^{-1}\\
-\F_{\scT}^{-1}[\cM_{11,\scT}^{-1}\cM_{12,\scT}]^*
&\F_{\scT}^{-1}\end{pmatrix}, $$
where $\F_{\scT}\deq \cM_{22,\scT}-\cM_{21,\scT}\cM_{11,\scT}^{-1}\cM_{12,\scT}$.
By (A1), there exists a constant $\d>0$, independent of $T$, such that the self-adjoint
operators $\cM_{11,\scT}$ and $\cM_{22,\scT}$ satisfy
\begin{equation}\label{M11+M22}
\cM_{11,\scT}\ges\d\,\cI,\q\cM_{22,\scT}\les-\d\,\cI,
\end{equation}
where $\cI$ denotes the identity operator. Clearly, \rf{M11+M22} implies that
$\cM_{11,\scT}^{-1}$, $\F_{\scT}^{-1}$, and hence $\cM_{\scT}^{-1}$ are bounded
linear operators, and that the operator norm
$$\|\cM_{\scT}^{-1}\|\les K,$$
for some constant $K>0$ that is independent of $T$.
Recall that the operator $\wh\cL_{\scT}$ is also bounded uniformly in $T$.
Thus, the following holds for some constant $K>0$ independent of $T$:
$$\dbE\int_0^T\[|\bar u_{1,\scT}(t)-\bar u_1(t)|^2+|\bar u_{2,\scT}(t)-\bar u_2(t)|^2\]dt
=\|\hu_{\scT}(\cd)\|^2\les K\dbE|\bar Y(T)|^2. $$
The desired \rf{uT-go-u} then follows from \autoref{remark:EY2go0}.
Moreover, as ${\wh X}_{\scT}(\cd) \deq \bar X(\cd)-\bX(\cd)$ satisfies
$$\left\{\begin{aligned}
d{\wh X}_{\scT}(t) & =  [A{\wh X}_{\scT}(t)+B\hu_{\scT}(t)]dt + [C{\wh X}_{\scT}(t)+D\hu_{\scT}(t)] dW(t), \\
 {\wh X}_{\scT}(0) & = 0,
\end{aligned}\right.$$
by applying \autoref{prop:SDE-bound} along with \rf{uT-go-u}, we obtain \rf{XT-go-X}.
\end{proof}

\section{Riccati Equations}\label{Sec:Ric}

Based on the results of the above section, in this section, we explore the asymptotic behavior
of the solution $P_{\scT}(\cd)$ to the differential Riccati equation \rf{Ric:game} as $T\to\i$.
Additionally, as a byproduct, we establish a closed-loop representation for the open-loop
saddle strategy of Problem (DG)$_{\scp\i}^{\scp0}$.

\ms

First, let us consider the following differential Riccati equation over $[0,T]$:
\begin{equation}\label{Ric:1}\left\{\begin{aligned}
& \dot P_{\scp1T} + P_{\scp1T}A + A^\top P_{\scp1T} + C^\top P_{\scp1T}C + Q \\
& \hp{\dot P_{\scp1T}} -(P_{\scp1T}B_1\!+\!C^\top\!P_{\scp1T}D_1\!+\!S_1^\top)
  (R_{11}\!+\!D_1^\top\!P_{\scp1T}D_1)^{-1}(B_1^\top\!P_{\scp1T}\!+\!D_1^\top\!P_{\scp1T}C\!+\!S_1)=0, \\
& P_{\scp1T}(T)=0.
\end{aligned}\right.\end{equation}
By (A1), there exists a constant $\d>0$ such that
$$J_{\scT}^{\scp0}(0;u_1(\cd),0) \ges \d\,\dbE\int_0^T|u_1(t)|^2dt,  \q\forall u_1(\cd)\in\sU_1[0,T]. $$
According to \cite[Theorem 2.5.6]{Sun-Yong2020book-a}, \rf{Ric:1} admits a unique solution
$P_{\scp1T}(\cd)\in C([0,T];\dbS^n)$ satisfying
\begin{equation}\label{R11+D1P1D1>>0}
  R_{11}+D_1^\top P_{\scp1T}(t)D_1\ges \a I_n, \q\forall t\in[0,T]
\end{equation}
for some constant $\a>0$. Note that if we let
$$ \Th_{\scp1T}(\cd) \deq
-(R_{11}+D_1^\top P_{\scp1T}D_1)^{-1}(B_1^\top P_{\scp1T}+D_1^\top P_{\scp1T}C+S_1)(\cd), $$
the equation \rf{Ric:1} can be rewritten as
$$\left\{\begin{aligned}
& \dot P_{\scp1T} + P_{\scp1T}(A+B_1\Th_{\scp1T}) + (A+B_1\Th_{\scp1T})^\top P_{\scp1T}
  + (C+D_1\Th_{\scp1T})^\top P_{\scp1T}(C+D_1\Th_{\scp1T}) \\
& \hp{\dot P_{\scp1T}} +\Th_{\scp1T}^\top R_{11}\Th_{\scp1T} + S_1^\top\Th_{\scp1T}
  + \Th_{\scp1T}^\top S_1  + Q =0, \\
& P_{\scp1T}(T)=0.
\end{aligned}\right.$$
Then by \cite[Proposition 2.5.5]{Sun-Yong2020book-a}, the constant $\a$ in \rf{R11+D1P1D1>>0}
can be chosen to be the same as the $\d$ in (A1), that is, the following holds:
\begin{equation}\label{R11+D1P1D1>delta}
  R_{11}+D_1^\top P_{\scp1T}(t)D_1\ges \d I_n, \q\forall t\in[0,T].
\end{equation}
Likewise, the differential Riccati equation
\begin{equation}\label{Ric:2}\left\{\begin{aligned}
&\dot P_{\scp2T} + P_{\scp2T}A + A^\top P_{\scp2T} + C^\top P_{\scp2T}C + Q \\
&\hp{\dot P_{\scp2T}} -(P_{\scp2T}B_2\!+\!C^\top\!P_{\scp2T}D_2\!+\!S_2^\top)
(R_{22}\!+\!D_2^\top\!P_{\scp2T}D_2)^{-1}(B_2^\top\!P_{\scp2T}\!+\!D_2^\top\!P_{\scp2T}C\!+\!S_2)=0, \\
& P_{\scp2T}(T)=0
\end{aligned}\right.\end{equation}
admits a unique solution $P_{\scp2T}(\cd)\in C([0,T];\dbS^n)$ satisfying
\begin{equation}\label{R22+D2P2D2<delta}
  R_{22}+D_2^\top P_{\scp2T}(t)D_2 \les -\d I_n, \q\forall t\in[0,T].
\end{equation}
Taking into account \cite[Proposition 4.7]{Sun2021}, we obtain the following result.

\begin{proposition}\label{prpo:comparison}
Let {\rm(A1)} hold. Let $P_{\scT}(\cd)$, $P_{\scp1T}(\cd)$, and $P_{\scp2T}(\cd)$ be the solutions
to \rf{Ric:game}, \rf{Ric:1}, and \rf{Ric:2}, respectively. Then
$$ P_{\scp1T}(t) \les P_{\scT}(t) \les P_{\scp2T}(t), \q\forall t\in[0,T]. $$
Consequently,
\begin{align}\label{uni-bound-PT}
R_{11}+D_1^\top P_{\scT}(t)D_1\ges \d I_n,
\q R_{22}+D_2^\top P_{\scT}(t)D_2 \les -\d I_n,
\q \forall t\in[0,T].
\end{align}
\end{proposition}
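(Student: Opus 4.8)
The plan is to prove the comparison through the value-function interpretation of the three Riccati equations together with the minimax structure of the game, rather than by a direct ODE comparison (which is delicate here precisely because the game Riccati equation \rf{Ric:game} couples the opposing roles of the two players). Since all coefficients are constant, \rf{Ric:game} is autonomous; hence for each fixed $t\in[0,T]$ the homogeneous game (DG)$^{\scp0}$ considered on the subinterval $[t,T]$, with state $x$ prescribed at time $t$, is after a time shift the homogeneous game over horizon $T-t$. Thus (A1) applies to it, and the restriction of $P_{\scT}(\cd)$ to $[t,T]$ is the strongly regular solution of the associated Riccati equation there; by \autoref{thm:uT-rep} this subinterval game is uniformly convex in $u_1$, uniformly concave in $u_2$, and admits a saddle point with value $\langle P_{\scT}(t)x,x\rangle$. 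Writing $\cJ_t(u_1,u_2)$ for the corresponding homogeneous cost over $[t,T]$ (integrand the quadratic form in \rf{J_T} with $q=r=0$, state started from $x$ at time $t$), the existence of the saddle gives
$$\langle P_{\scT}(t)x,x\rangle=\inf_{u_1}\sup_{u_2}\cJ_t(u_1,u_2)=\sup_{u_2}\inf_{u_1}\cJ_t(u_1,u_2).$$

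Next I would record the two single-controller representations. Freezing $u_2\equiv0$ reduces $\cJ_t$ to the cost of the uniformly convex LQ control problem for Player~1 with data $(A,B_1,C,D_1)$ and weights $(Q,S_1,R_{11})$, whose Riccati equation is exactly \rf{Ric:1}; by \cite[Theorem 2.5.6]{Sun-Yong2020book-a} this problem is uniquely solvable with value $\langle P_{\scp1T}(t)x,x\rangle$. Symmetrically, freezing $u_1\equiv0$ gives the uniformly concave problem for Player~2 whose Riccati equation is \rf{Ric:2} and whose value is $\langle P_{\scp2T}(t)x,x\rangle$.

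The comparison then follows from two elementary minimax inequalities, using that $u_1\equiv0$ and $u_2\equiv0$ are themselves admissible choices:
$$\begin{aligned}
\langle P_{\scT}(t)x,x\rangle &=\inf_{u_1}\sup_{u_2}\cJ_t(u_1,u_2)\les\sup_{u_2}\cJ_t(0,u_2)=\langle P_{\scp2T}(t)x,x\rangle,\\
\langle P_{\scT}(t)x,x\rangle &=\sup_{u_2}\inf_{u_1}\cJ_t(u_1,u_2)\ges\inf_{u_1}\cJ_t(u_1,0)=\langle P_{\scp1T}(t)x,x\rangle.
\end{aligned}$$
As these hold for every $x\in\dbR^n$, the quadratic forms are ordered, whence $P_{\scp1T}(t)\les P_{\scT}(t)\les P_{\scp2T}(t)$ for all $t\in[0,T]$. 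Finally, \rf{uni-bound-PT} is obtained by sandwiching: from $P_{\scp1T}(t)\les P_{\scT}(t)$ and \rf{R11+D1P1D1>delta}, one gets $R_{11}+D_1^\top P_{\scT}(t)D_1\ges R_{11}+D_1^\top P_{\scp1T}(t)D_1\ges\d I_n$, while from $P_{\scT}(t)\les P_{\scp2T}(t)$ and \rf{R22+D2P2D2<delta}, one gets $R_{22}+D_2^\top P_{\scT}(t)D_2\les R_{22}+D_2^\top P_{\scp2T}(t)D_2\les-\d I_n$.

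The step I expect to be the main obstacle is the first one: rigorously identifying $\langle P_{\scT}(t)x,x\rangle$ with a genuine saddle value on \emph{every} subinterval $[t,T]$, i.e.\ justifying the interchange $\inf\sup=\sup\inf$ there. This is exactly what (A1) secures---being assumed for all horizons, it transfers (via autonomy) to the shifted games on $[t,T]$, so \autoref{thm:uT-rep}(i),(iv) and the strong regularity of $P_{\scT}$ furnish the common value. Once these representations are in place, the remainder is the two-line minimax sandwich above and is essentially the content of \cite[Proposition 4.7]{Sun2021}.
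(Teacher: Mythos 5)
Your proof is correct and follows essentially the same route as the paper, which simply invokes \cite[Proposition 4.7]{Sun2021} for the comparison; that cited proposition is proved exactly by the value-function identification $\lan P_{\scT}(t)x,x\ran=\inf_{u_1}\sup_{u_2}=\sup_{u_2}\inf_{u_1}$ on the shifted horizon together with the two one-player sandwich inequalities you write down. The concluding derivation of \rf{uni-bound-PT} from \rf{R11+D1P1D1>delta} and \rf{R22+D2P2D2<delta} also matches the paper.
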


To further discuss the properties of $P_{\scT}(\cd)$, let us revisit the concept of $L^2$-stabilizability.
Recall the following notation:
$$ m \deq m_1 + m_2, \q B \deq (B_1, B_2), \q D \deq (D_1, D_2). $$
Denote by $[A,C;B,D]$ the following controlled linear system:
\begin{equation}\label{[ACBD]}
dX(t) = [AX(t)+Bu(t)]dt + [CX(t)+Du(t)]dW(t), \q t\ges0.
\end{equation}

\begin{definition}
System $[A,C;B,D]$ is called {\it $L^2$-stabilizable} if there exists a matrix $\Th\in\dbR^{m\times n}$
such that the (closed-loop) system $[A+B\Th,C+D\Th]$ is $L^2$-stable.
In this case, $\Th$ is called a {\it stabilizer} of $[A,C;B,D]$.
\end{definition}

Now, we present the following result, which establishes the convergence of $\lim_{T\to\i}P_{\scT}(t)$
and provides some properties for the limit matrix $P$.

\begin{theorem}\label{thm:PTgoP}
Let {\rm(A1)--(A2)} hold. Let $P_{\scT}(\cd)$ be the unique strongly regular solution
to the differential Riccati equation \rf{Ric:game}. Then the limit
$$ P\deq\lim_{T\to\i}P_{\scT}(t) $$
exists, is independent of $t$, and has the following properties:

\ms

{\rm(i)} $R_{11} + D_1^\top PD_1>0$, $R_{22} + D_2^\top PD_2 <0$.

\ms

{\rm(ii)} $P$ satisfies the following algebraic Riccati equation (ARE, for short):
\begin{equation}\label{ARE:game}\begin{aligned}
      & PA + A^\top P + C^\top PC + Q\\
      &\hp{PA} -(PB+C^\top PD+S^\top)(R+D^\top PD)^{-1}(B^\top P+D^\top PC+S)=0.
\end{aligned}\end{equation}

{\rm(iii)} The matrix
\begin{equation}\label{def:Th}
\Th \deq -(R+D^\top PD)^{-1}(B^\top P+D^\top PC+S)
\end{equation}
is a stabilizer of the system $[A,C;B,D]$.
\end{theorem}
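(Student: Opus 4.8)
The plan is to link $P_{\scT}(\cd)$ to the value function of the homogeneous game and then transfer the convergence of saddle strategies obtained in \autoref{prop:uT-go-u} to the Riccati level, reading off \rf{ARE:game} and the stabilizing property of \rf{def:Th} by passing to the limit.

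\emph{Existence of the limit and $t$-independence.} Because the coefficients of \rf{Ric:game} are time-invariant, setting $\Pi(s)\deq P_{\scT}(T-s)$ shows that $\Pi$ solves the autonomous equation $\dot\Pi=\cR(\Pi)$, where $\cR(\cd)$ denotes the left-hand side of \rf{ARE:game}, with $\Pi(0)=0$; by uniqueness this is one single trajectory, independent of $T$, and $P_{\scT}(t)=\Pi(T-t)$. Thus it suffices to show $\lim_{s\to\i}\Pi(s)$ exists. For a homogeneous game the value function is quadratic, so $\langle P_{\scT}(0)x,x\rangle=V_{\scT}^{\scp0}(x)=J_{\scT}^{\scp0}(x;\bu(\cd),\bv(\cd))$. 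Using \rf{uT-go-u}--\rf{XT-go-X}, the fixed quadratic form of the running cost, and the vanishing of the tail $\dbE\int_T^\i(\cd)dt$ (by admissibility over $[0,\i)$), I would pass to the limit to obtain $V_{\scT}^{\scp0}(x)\to J_{\scp\i}^{\scp0}(x;\bar u_1(\cd),\bar u_2(\cd))\deq V_{\scp\i}^{\scp0}(x)$ for every $x$. Polarizing the symmetric matrices $P_{\scT}(0)$ then produces a symmetric limit $P$ with $\langle Px,x\rangle=V_{\scp\i}^{\scp0}(x)$, and $P_{\scT}(t)=\Pi(T-t)\to P$ for each fixed $t$. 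In particular $\Pi$ is bounded on $[0,\i)$, hence $P_{\scT}(\cd)$ is bounded uniformly in $(t,T)$ — a fact I will reuse repeatedly.

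\emph{Parts (i) and (ii).} Part (i) follows by letting $T\to\i$ in \rf{uni-bound-PT}, giving $R_{11}+D_1^\top PD_1\ges\d I_n>0$ and $R_{22}+D_2^\top PD_2\les-\d I_n<0$; since $R+D^\top PD$ has a positive-definite $(1,1)$-block and a negative-definite Schur complement, it is invertible, so $\cR$ is continuous at $P$. For part (ii) I would integrate rather than differentiate: along $\dot\Pi=\cR(\Pi)$ the continuity just noted yields $\dot\Pi(s)=\cR(\Pi(s))\to\cR(P)$, and since $\Pi(s)$ itself converges, a convergent function with convergent derivative must have derivative limit zero; hence $\cR(P)=0$, which is exactly \rf{ARE:game}.

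\emph{Part (iii), the main obstacle.} Here the usual Lyapunov argument with $\langle P\,\cd\,,\cd\,\rangle$ is unavailable because $P$ need not be sign-definite — precisely the difficulty created by the opposing roles of the two players. I would instead prove that the open-loop saddle strategy $(\bar u_1,\bar u_2)$ of Problem (DG)$_{\scp\i}^{\scp0}$ has the closed-loop representation $\bar u(t)=\Th\bar X(t)$, and then use infinite-horizon solvability for \emph{every} initial state. By \autoref{thm:uT-rep}(vi) the finite-horizon saddle satisfies $\bar u_{\scT}(t)=\bar\Th_{\scT}(t)\bX(t)$, where $\bar\Th_{\scT}(t)\to\Th$ pointwise in $t$ (from $P_{\scT}(t)\to P$ and the uniform invertibility of $R+D^\top P_{\scT}(t)D$) and $\bar\Th_{\scT}(\cd)$ is bounded uniformly in $(t,T)$ (from the uniform boundedness of $P_{\scT}$). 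Decomposing $\bar u_{\scT}-\Th\bar X=\bar\Th_{\scT}(\bX-\bar X)+(\bar\Th_{\scT}-\Th)\bar X$ and estimating on a fixed interval $[0,T_0]$ — the first term via \rf{XT-go-X} and the uniform bound on $\bar\Th_{\scT}$, the second via dominated convergence, together with \rf{uT-go-u} — I would conclude $\dbE\int_0^{T_0}|\bar u-\Th\bar X|^2dt=0$ for every $T_0$, i.e. $\bar u=\Th\bar X$ a.e. on $[0,\i)$. Consequently $\bar X$ solves $dX=(A+B\Th)X\,dt+(C+D\Th)X\,dW$ with $X(0)=x$ and lies in $L_\dbF^2(0,\i;\dbR^n)$, being the admissible optimal state. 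As \autoref{thm:Ginfty-optimality} furnishes such a saddle for every $x$, the closed-loop system $[A+B\Th,C+D\Th]$ is $L^2$-stable and $\Th$ is a stabilizer. The points requiring the most care are the uniform-in-$T$ bounds underlying the dominated-convergence step and the verification that the running-cost convergence in the first part is legitimate for the indefinite quadratic form involved.
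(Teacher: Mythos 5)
Your proposal is correct and follows essentially the same route as the paper: existence of the limit via the value-function identity $\lan P_{\scT}(0)x,x\ran=V_{\scT}^{\scp0}(x)$ together with \autoref{prop:uT-go-u} and the shift property $P_{\scT}(t)=P_{\scT-t}(0)$; part (ii) by passing to the limit in the Riccati ODE (your ``convergent function with convergent derivative'' lemma is the differentiated form of the paper's integration over $[T-1,T]$); and part (iii) by the same decomposition $\bar u_{\scT}-\Th\bar X=\bar\Th_{\scT}(\bX-\bar X)+(\bar\Th_{\scT}-\Th)\bar X$ yielding $\bar u=\Th\bar X$ and hence stabilizability. The differences (time-reversed autonomous formulation, working on a fixed $[0,T_0]$) are cosmetic.
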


\begin{proof}
According to \cite[Theorem 4.4]{Sun2021}, the value function $V_{\scT}^{\scp0}(\cd)$
of Problem (DG)$_{\scT}^{\scp0}$ is given by
$$V_{\scT}^{\scp0}(x) = \lan P_{\scT}(0)x,x\ran, \q\forall x\in\dbR^n. $$
Denote by $V_{\scp\i}^{\scp0}(\cd)$ the value function of Problem (DG)$_{\scp\i}^{\scp0}$, and let
$$\bar u_{\scT}(\cd) \deq \begin{pmatrix}\bar u_{1,\scT}(\cd) \\ \bar u_{2,\scT}(\cd)\end{pmatrix}
\q\text{and}\q
\bar u(\cd) \deq \begin{pmatrix}\bar u_1(\cd) \\ \bar u_2(\cd)\end{pmatrix}$$
be the open-loop saddle strategies of Problem (DG)$_{\scT}^{\scp0}$ and Problem (DG)$_{\scp\i}^{\scp0}$
for the initial state $x$, respectively. Then, by \autoref{prop:uT-go-u},
\begin{align*}
& \lim_{T\to\i}\lan P_{\scT}(0)x,x\ran = \lim_{T\to\i}V_{\scT}^{\scp0}(x)
  =\lim_{T\to\i}J_{\scT}^{\scp0}(x;\bar u_{1,\scT}(\cd),\bar u_{2,\scT}(\cd)) \\
&\q= \lim_{T\to\i}\dbE\int_0^T\Blan\!\!
     \begin{pmatrix*}[l]Q & \!\!S^\top \\ S & \!\!R \end{pmatrix*}\!\!
     \begin{pmatrix}\bX(t) \\ \bar u_{\scT}(t) \end{pmatrix}\!,\!
     \begin{pmatrix}\bX(t) \\ \bar u_{\scT}(t) \end{pmatrix}\!\!\Bran dt \\
&\q= \dbE\int_0^\i\Blan\!\!
     \begin{pmatrix*}[l]Q & \!\!S^\top \\ S & \!\!R \end{pmatrix*}\!\!
     \begin{pmatrix}\bar X(t) \\ \bar u(t) \end{pmatrix}\!,\!
     \begin{pmatrix}\bar X(t) \\ \bar u(t) \end{pmatrix}\!\!\Bran dt
     =J^{\scp0}_{\scp\i}(x;\bar u_1(\cd),\bar u_2(\cd))=V_{\scp\i}^{\scp0}(x).
\end{align*}
Since the initial state $x\in\dbR^n$ is arbitrary, we conclude that the limit
$P\deq\lim_{T\to\i}P_{\scT}(0)$ exists.
Observe that for any $0\les t\les T$ and any $0\les s\les T-t$,
\begin{equation}\label{PT:group-prop}
P_{\scT}(t+s)=P_{\scT-t}(s).
\end{equation}
In particular, taking $s=0$, we get
\begin{equation}\label{PT:limit}
\lim_{T\to\i}P_{\scT}(t)=\lim_{T\to\i}P_{\scT-t}(0)=P.
\end{equation}
The property (i) follows directly from \rf{uni-bound-PT}. To prove (ii), let
\begin{align*}
\L_{\scT}(t) &\deq P_{\scT}(t)A + A^\top P_{\scT}(t) + C^\top P_{\scT}(t)C + Q \\
&\hp{=\ } -[P_{\scT}(t)B+C^\top P_{\scT}(t)D+S^\top][R+D^\top P_{\scT}(t)D]^{-1}
           [B^\top P_{\scT}(t)+D^\top P_{\scT}(t)C+S].
\end{align*}
From \rf{PT:group-prop} and ODE \rf{Ric:game}, we have
$$ P_{\scT}(1)-P_{\scT}(0)=-\int_0^1\L_{\scT}(t)dt
= -\int_0^1 \L_{\scT-t}(0)dt = \int^T_{T-1}\L_{t}(0)dt. $$
Letting $T\to\i$ in the above, we obtain
\begin{align*}
0=\lim_{t\to\i}\L_{t}(0) &= PA+A^\top P+C^\top PC+Q \\
&\hp{=\ } -(PB+C^\top PD+S^\top)(R+D^\top PD)^{-1}(B^\top P+D^\top PC+S).
\end{align*}
It remains to prove (iii). Applying \autoref{thm:uT-rep} to the case of
Problem (DG)$_{\scT}^{\scp0}$, we see that the open-loop saddle strategy
$(\bar u_{1,\scT}(\cd),\bar u_{2,\scT}(\cd))$ of Problem (DG)$_{\scT}^{\scp0}$
has the following closed-loop representation:
$$ \bar u_{\scT}(t) \deq \begin{pmatrix}\bar u_{\sc1,\scT}(t) \\ \bar u_{\sc2,\scT}(t)\end{pmatrix}
= \bar\Th_{\scT}(t)\bX(t), \q t\in[0,T], $$
where $\bar\Th_{\scT}(\cd)$ is defined by \rf{def:barTh} and $\bX(\cd)$ is the solution of the closed-loop system
$$\left\{\begin{aligned}
d\bX(t) &= [A+B\bar\Th_{\scT}(t)]\bX(t)dt + [C+D\bar\Th_{\scT}(t)]\bX(t) dW(t), \q t\in[0,T], \\
 \bX(0) &= x.
\end{aligned}\right.$$
Note that by \rf{PT:group-prop} and \rf{PT:limit}, the following holds for some constant $K>0$:
\begin{equation}\label{PT:uni-bound}
|P_{\scT}(t)|=|P_{\scT-t}(0)|\les K,\q\forall0\les t\les T,
\end{equation}
which in turn implies that
\begin{equation}\label{ThT:uni-bound}
|\bar\Th_{\scT}(t)|\les K,\q\forall0\les t\les T,
\end{equation}
for some possibly different constant $K>0$. Also, note that
\begin{equation}\label{ThT:lim}
\lim_{T\to\i}\bar\Th_{\scT}(t) = \Th = -(R+D^\top PD)^{-1}(B^\top P+D^\top PC+S).
\end{equation}
Let $\bar X(\cd)$ be the state process corresponding to the saddle strategy
$\bar u(\cd)$ of Problem (DG)$_{\scp\i}^{\scp0}$.
Then by \autoref{prop:uT-go-u}, \rf{ThT:uni-bound}, and \rf{ThT:lim},
\begin{align*}
& \dbE\int_0^\i|\bar u(t)-\Th\bar X(t)|^2dt = \lim_{T\to\i}\dbE\int_0^T|\bar u(t)-\Th\bar X(t)|^2dt \\
&\q\les 2\lim_{T\to\i}\lt[\dbE\int_0^T|\bar u(t)-\bar u_{\scT}(t)|^2dt
        +\dbE\int_0^T|\bar u_{\scT}(t)-\Th\bar X(t)|^2dt\rt] \\
&\q= 2\lim_{T\to\i}\dbE\int_0^T|\bar\Th_{\scT}(t)\bX(t)-\Th\bar X(t)|^2dt \\
&\q\les 4\lim_{T\to\i}\lt[\dbE\int_0^T|\bar\Th_{\scT}(t)|^2|\bX(t)-\bar X(t)|^2dt
        +\dbE\int_0^T|\bar\Th_{\scT}(t)-\Th|^2|\bar X(t)|^2dt\rt]=0.
\end{align*}
This implies that $\bar u(\cd)=\Th\bar X(\cd)$ and hence
$$\left\{\begin{aligned}
d\bar X(t) &= (A+B\Th)\bar X(t)dt + (C+D\Th)\bar X(t)dW(t), \q t\ges 0, \\
 \bar X(0) &= x.
\end{aligned}\right.$$
Since we already know that $\bar u(\cd)\in\sU[0,\i)$
(implying $\bar X(\cd)\in L_{\dbF}^2(0,\i;\dbR^n)$ by \autoref{prop:SDE-bound}),
$\Th$ is, by definition, a stabilizer of the system $[A,C;B,D]$.
\end{proof}

From the above proof, we also obtain the following corollary.

\begin{corollary}\label{crllry:unibound:R+DPD}
Let {\rm(A1)--(A2)} hold. Then there exists a constant $K>0$ such that
$$|[R+D^\top P_{\scT}(t)D]^{-1}|\les K,\q\forall0\les t\les T<\i.$$
\end{corollary}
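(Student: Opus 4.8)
The plan is to exploit the block structure of $R+D^\top P_{\scT}(t)D$ together with the two ingredients already assembled in the proof of \autoref{thm:PTgoP}: the uniform definiteness of the diagonal blocks recorded in \rf{uni-bound-PT} and the uniform bound \rf{PT:uni-bound} on $P_{\scT}(t)$. Write $M_{\scT}(t)\deq R+D^\top P_{\scT}(t)D$, which is symmetric since both $R$ and $P_{\scT}(t)$ are, with diagonal blocks $M_{11}\deq R_{11}+D_1^\top P_{\scT}(t)D_1$ and $M_{22}\deq R_{22}+D_2^\top P_{\scT}(t)D_2$ and off-diagonal blocks $M_{12}\deq R_{12}+D_1^\top P_{\scT}(t)D_2=M_{21}^\top$. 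By \rf{uni-bound-PT} we have $M_{11}\ges\d I$ and $M_{22}\les-\d I$ for all $0\les t\les T$, with $\d$ independent of $t$ and $T$. This is precisely the finite-dimensional counterpart of the operator inequalities \rf{M11+M22}, so the natural strategy is to mirror the block-inversion argument used for $\cM_{\scT}^{-1}$ in the proof of \autoref{thm:Ginfty-optimality}.

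First I would bound the off-diagonal block. From \rf{PT:uni-bound} we have $|P_{\scT}(t)|\les K$ uniformly, whence $|M_{12}|\les|R_{12}|+|D_1|\,|D_2|\,K$ is bounded by a constant independent of $t$ and $T$. Next, since $M_{11}\ges\d I>0$, it is invertible with $|M_{11}^{-1}|\les\d^{-1}$, and I would form the Schur complement $\F_{\scT}(t)\deq M_{22}-M_{21}M_{11}^{-1}M_{12}$. The key observation is that $M_{21}M_{11}^{-1}M_{12}=M_{12}^\top M_{11}^{-1}M_{12}\ges0$ because $M_{11}^{-1}>0$; combined with $M_{22}\les-\d I$ this yields $\F_{\scT}(t)\les M_{22}\les-\d I$, so $\F_{\scT}(t)$ is invertible with $|\F_{\scT}(t)^{-1}|\les\d^{-1}$, uniformly in $t$ and $T$.

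Finally, I would invoke the block-inverse formula, the same one displayed for $\cM_{\scT}^{-1}$, which expresses each block of $M_{\scT}(t)^{-1}$ as a product of $M_{11}^{-1}$, $M_{12}$, and $\F_{\scT}(t)^{-1}$. Since each of these three factors is uniformly bounded, every block of $M_{\scT}(t)^{-1}$ is bounded by a constant depending only on $\d$, $|R_{12}|$, $|D_1|$, $|D_2|$, and $K$, and therefore $|[R+D^\top P_{\scT}(t)D]^{-1}|\les K'$ for a constant $K'$ independent of $t$ and $T$, which is the claim.

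I expect the only substantive point to be the uniform negative-definiteness of the Schur complement $\F_{\scT}(t)$; everything else is bookkeeping of uniform bounds. This step hinges on the sign argument $M_{12}^\top M_{11}^{-1}M_{12}\ges0$, which pushes $\F_{\scT}(t)$ further below $M_{22}$ rather than toward $0$, so the $-\d I$ ceiling is preserved without any cancellation. A secondary point worth verifying is that the constant $\d$ in \rf{uni-bound-PT} is genuinely independent of both $t$ and $T$, which it is, since it is inherited from (A1) through \rf{R11+D1P1D1>delta} and \rf{R22+D2P2D2<delta} via \autoref{prpo:comparison}.
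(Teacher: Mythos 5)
Your proposal is correct and follows essentially the same route as the paper: the paper's proof likewise sets $M_{ij}(t)\deq R_{ij}+D_i^\top P_{\scT}(t)D_j$, draws the uniform bounds $M_{11}\ges\d I$, $M_{22}\les-\d I$, $|M_{12}|\les\rho$ from \rf{uni-bound-PT} and \rf{PT:uni-bound}, shows the Schur complement $N=M_{22}-M_{21}M_{11}^{-1}M_{12}\les-\d I$, and concludes via the block-inverse formula \rf{exp:R+DPD}. Your explicit justification that $M_{12}^\top M_{11}^{-1}M_{12}\ges0$ pushes the Schur complement further below $-\d I$ is exactly the point the paper leaves implicit.
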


\begin{proof}
For notational simplicity, let us fix $0\les t\les T<\i$ and set
$$M_{ij}(t)\deq R_{ij}+D_i^\top P_{\scT}(t)D_j;\q i,j=1,2.$$
Form \rf{uni-bound-PT} and \rf{PT:uni-bound}, we see that for some constants $\d,\rho>0$
that are independent of $t$ and $T$,
$$ M_{11} \ges \d I, \q M_{22} \les -\d I, \q |M_{12}|=|M_{21}| \les \rho. $$
It is easy to verify that the inverse of
$$R+D^\top P_{\scT}(t)D
=\begin{pmatrix}
R_{11}+ D_1^\top P_{\scT}(t)D_1 & R_{12} + D_1^\top P_{\scT}(t)D_2 \\[1mm]
R_{21}+ D_2^\top P_{\scT}(t)D_1 & R_{22} + D_2^\top P_{\scT}(t)D_2
\end{pmatrix}
=\begin{pmatrix}
M_{11}(t)&M_{12}(t)\\[1mm]
M_{21}(t)&M_{22}(t)
\end{pmatrix}$$
is given by
\begin{align}\label{exp:R+DPD}
[R+D^\top P_{\scT}(t)D]^{-1}
=\begin{pmatrix}
M_{11}^{-1}+(M_{11}^{-1}M_{12})N^{-1}(M_{11}^{-1}M_{12})^\top & -(M_{11}^{-1}M_{12})N^{-1} \\
-N^{-1}(M_{11}^{-1}M_{12})^\top & N^{-1} \end{pmatrix},
\end{align}
where
$$ N \deq M_{22}-M_{21}M_{11}^{-1}M_{12} \les -\d I. $$
Note that
$$ |M_{11}^{-1}| \les \sqrt{n}\d^{-1}, \q |N^{-1}|\les \sqrt{n}\d^{-1}, \q |M_{12}|=|M_{21}| \les \rho. $$
The desired result then follows from \rf{exp:R+DPD}.
\end{proof}

\autoref{thm:PTgoP} establishes the existence of a solution to the ARE \rf{ARE:game}.
The following result further confirms the uniqueness of this solution.

\begin{theorem}\label{thm:ARE-uniqueness}
Let {\rm(A1)--(A2)} hold. The ARE \rf{ARE:game} has unique solution $P\in\dbS^n$
satisfying the following conditions:
\begin{enumerate}[\rm(i)]
\item $R_{11}+D_1^\top PD_1>0$, $R_{22} + D_2^\top PD_2<0$;
\item $-(R+D^\top PD)^{-1}(B^\top P+D^\top PC+S)$ is a stabilizer of the system $[A,C;B,D]$.
\end{enumerate}
\end{theorem}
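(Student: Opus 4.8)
The plan is to prove uniqueness (existence having been settled in \autoref{thm:PTgoP}) by showing that \emph{every} solution $P\in\dbS^n$ of \rf{ARE:game} satisfying (i)--(ii) generates, through its feedback matrix, the open-loop saddle strategy of Problem (DG)$_{\scp\i}^{\scp0}$. Since that strategy is unique by \autoref{thm:Ginfty-optimality}, all such $P$ must share the same feedback matrix, and a standard stable Lyapunov equation will then pin down $P$ itself. Concretely, fix a solution $P$ satisfying (i)--(ii). Condition (i), via the Schur-complement computation already used in the proof of \autoref{crllry:unibound:R+DPD}, guarantees that $R+D^\top PD$ is invertible, so the constant matrix $\Th\deq-(R+D^\top PD)^{-1}(B^\top P+D^\top PC+S)$ is well defined, and by (ii) it stabilizes $[A,C;B,D]$. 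Let $\bar X(\cd)$ solve $d\bar X=(A+B\Th)\bar X\,dt+(C+D\Th)\bar X\,dW$ with $\bar X(0)=x$, and set $\bar u\deq\Th\bar X$; the $L^2$-stability of $[A+B\Th,C+D\Th]$ gives $\bar X(\cd)\in L^2_{\dbF}(0,\i;\dbR^n)$ and $\bar u(\cd)\in\sU[0,\i)$.

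Next I would verify the characterization \rf{FBSDE-kehua} of \autoref{thm:Ginfty-optimality}. Set $\bar Y\deq P\bar X$ and $\bar Z\deq P(C+D\Th)\bar X$. Applying It\^o's rule to $P\bar X$ shows at once that the diffusion term is $\bar Z\,dW$ and the drift is $P(A+B\Th)\bar X$; using \rf{ARE:game} in the form $PA+A^\top P+C^\top PC+Q=(PB+C^\top PD+S^\top)(R+D^\top PD)^{-1}(B^\top P+D^\top PC+S)=-(PB+C^\top PD+S^\top)\Th$, a short computation gives $P(A+B\Th)+A^\top P+C^\top P(C+D\Th)+Q+S^\top\Th=0$, which is precisely the drift demanded by \rf{BSDE-infty}. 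As $\bar Y,\bar Z$ are linear in $\bar X\in L^2_{\dbF}(0,\i;\dbR^n)$, the pair $(\bar Y,\bar Z)$ is the $L^2$-stable adapted solution of \rf{BSDE-infty}. Finally, $B^\top\bar Y+D^\top\bar Z+S\bar X+R\bar u=\big[(B^\top P+D^\top PC+S)+(R+D^\top PD)\Th\big]\bar X=0$ by the very definition of $\Th$, so \rf{FBSDE-kehua} holds and $\bar u$ is the unique open-loop saddle strategy of Problem (DG)$_{\scp\i}^{\scp0}$.

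To conclude, suppose $P_1,P_2\in\dbS^n$ both satisfy (i)--(ii), with feedback matrices $\Th_1,\Th_2$. By the previous step, for each initial state $x$ the controls $\Th_1\bar X^{(1)}$ and $\Th_2\bar X^{(2)}$ both coincide with the unique saddle strategy $\bar u$; since the state process generated by a given control is unique, the corresponding trajectories both equal the unique state process $\bar X$ driven by $\bar u$, whence $\Th_1\bar X=\Th_2\bar X$. Evaluating at $t=0$, where $\bar X(0)=x$ is an arbitrary deterministic vector, yields $\Th_1=\Th_2\deq\Th$. Now the completion-of-squares identity for \rf{ARE:game}, namely $P_k(A+B\Th)+(A+B\Th)^\top P_k+(C+D\Th)^\top P_k(C+D\Th)+Q+\Th^\top R\Th+S^\top\Th+\Th^\top S=(\Th-\Th_k)^\top(R+D^\top P_kD)(\Th-\Th_k)$, has vanishing right-hand side when $\Th_k=\Th$, so $P_1$ and $P_2$ solve the same Lyapunov equation associated with the $L^2$-stable system $[A+B\Th,C+D\Th]$; such an equation has a unique solution in $\dbS^n$, and therefore $P_1=P_2$.

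The main obstacle, and the reason a direct argument fails, is that $R+D^\top PD$ is indefinite (positive on Player 1's block, negative on Player 2's). Consequently the naive route of subtracting the two AREs and manipulating $P_1-P_2$ produces Lyapunov equations whose right-hand sides are \emph{not} sign-definite, so neither the comparison/monotonicity method of the optimal-control case nor a one-line algebraic cancellation applies. The device that circumvents this is to transfer the uniqueness question onto the already-established uniqueness of the open-loop saddle strategy in \autoref{thm:Ginfty-optimality}, reducing the entire problem to a single standard stable Lyapunov equation.
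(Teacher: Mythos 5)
Your proposal is correct, and its first two paragraphs coincide with the paper's own proof: both fix a solution $P$ satisfying (i)--(ii), form the closed-loop pair $(\bar X,\Theta\bar X)$, set $\bar Y=P\bar X$ and $\bar Z=P(C+D\Theta)\bar X$, verify the stationarity condition and the BSDE, and invoke \autoref{thm:Ginfty-optimality} to conclude that $\Theta\bar X$ is the unique open-loop saddle strategy of Problem (DG)$^0_\infty$. You diverge only in how uniqueness of $P$ is then extracted. The paper continues with an It\^o/completion-of-squares computation showing $J^0_\infty(x;\Theta X(\cdot))=\langle Px,x\rangle$, i.e.\ $V^0_\infty(x)=\langle Px,x\rangle$ for every $x$; since a symmetric matrix is determined by its quadratic form and the value function is intrinsic to the problem, uniqueness is immediate. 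You instead compare two candidate solutions: uniqueness of the saddle strategy forces $\Theta_1\bar X=\Theta_2\bar X$, hence $\Theta_1=\Theta_2$, after which the completion-of-squares identity reduces both AREs to the same Lyapunov equation for the $L^2$-stable system $[A+B\Theta,C+D\Theta]$, whose solution in $\mathbb{S}^n$ is unique. Both routes rest on the same identity and the same reduction to \autoref{thm:Ginfty-optimality}; the paper's version is slightly more economical (it never needs to identify the two feedback matrices), while yours isolates the intermediate fact $\Theta_1=\Theta_2$ explicitly and replaces the value-function argument with a standard Lyapunov uniqueness statement. One small point you should make explicit: the saddle strategy is determined only for a.e.\ $t$, a.s., so the evaluation at $t=0$ giving $\Theta_1x=\Theta_2x$ requires the (trivial) remark that $t\mapsto\Theta_k\bar X(t)$ is continuous; this is harmless but worth a sentence.
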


\begin{proof}
Suppose that $P$ is a solution of \rf{ARE:game} such that the conditions stated in (i) and (ii) hold. According to \autoref{thm:Ginfty-optimality}, Problem (DG)$_{\scp\i}^{\scp0}$ has a unique open-loop saddle strategy for every initial state $x$. If we can prove that the value function $V_{\scp\i}^{\scp0}(\cd)$ of Problem (DG)$_{\scp\i}^{\scp0}$ takes the form
\begin{align}\label{V0:rep}
V_{\scp\i}^{\scp0}(x) = \lan Px,x\ran, \q\forall x\in\dbR^n,
\end{align}
then uniqueness follows immediately. To this end, let $\Th$ be defined by \rf{def:Th}
and consider the following SDE:
\begin{equation*}\left\{\begin{aligned}
dX(t) &= (A+B\Th)X(t)dt + (C+D\Th)X(t)dW(t), \q t\ges 0, \\
 X(0) &= x.
\end{aligned}\right.\end{equation*}
Since $\Th$ is a stabilizer of system $[A,C;B,D]$, we have
$$ X(\cd)\in L_{\dbF}^2(0,\i;\dbR^n),\q u(\cd)\deq\Th X(\cd)\in \sU[0,\i).$$
Define
$$Y(t)\deq PX(t), \q Z(t)\deq P(C+D\Th)X(t), \q t\ges0. $$
It is straightforward to verify that
$$ B^\top Y(t)+D^\top Z(t)+S X(t)+R u(t) = 0, \q\ae~t\in[0,\i),~\as $$
Furthermore, noting that
\begin{align*}
PA + A^\top P + C^\top PC + Q +(PB+C^\top PD+S^\top)\Th=0,
\end{align*}
we have
\begin{align*}
dY(t) &= PdX(t) = P[AX(t)+Bu(t)]dt + P[CX(t)+Du(t)]dW(t) \\
&=-\lt[A^\top P + C^\top PC + Q+(PB+C^\top PD+S^\top)\Th-PB\Th\rt]X(t)dt + Z(t)dW(t), \\
&=-\lt[A^\top Y(t) + C^\top Z(t) + QX(t) + S^\top u(t)\rt]dt + Z(t)dW(t).
\end{align*}
Using \autoref{thm:Ginfty-optimality} again, we conclude that $u(\cd)\deq \Th X(\cd)$
is the unique open-loop saddle strategy of Problem (DG)$_{\scp\i}^{\scp0}$ for $x$.
Substituting $u(\cd)\deq \Th X(\cd)$ into the performance functional yields
\begin{align}\label{1-5:1}
J_{\scp\i}^{\scp0}(x;u_1(\cd),u_2(\cd))
= \dbE\int_0^\i\lan(Q+S^\top\Th+\Th^\top S +\Th^\top R\Th)X(t),X(t)\ran dt.
\end{align}
On the other hand, by It\^{o}'s rule,
\begin{align*}
& \dbE\lan PX(t),X(t)\ran-\lan Px,x\ran \\
&\q= \dbE\int_0^t\lan[P(A+B\Th)+(A+B\Th)^\top P+(C+D\Th)^\top P(C+D\Th)]X(s),X(s)\ran ds.
\end{align*}
Letting $t\to\i$ gives
\begin{align*}
 -\lan Px,x\ran = \dbE\int_0^\i\lan[P(A+B\Th)+(A+B\Th)^\top P+(C+D\Th)^\top P(C+D\Th)]X(t),X(t)\ran dt.
\end{align*}
Adding the above to \rf{1-5:1} and noting that \rf{ARE:game} can be rewritten as
$$ P(A+B\Th)+(A+B\Th)^\top P+(C+D\Th)^\top P(C+D\Th)+Q+S^\top\Th+\Th^\top S +\Th^\top R\Th=0,$$
we obtain $J_{\scp\i}^{\scp0}(x;u_1(\cd),u_2(\cd))-\lan Px,x\ran=0$, which implies \rf{V0:rep}.
\end{proof}

From the proof of \autoref{thm:ARE-uniqueness}, we immediately obtain the following corollary.

\begin{corollary}
Let {\rm(A1)--(A2)} hold. Let $P\in\dbS^n$ be the unique solution of the ARE \rf{ARE:game}
satisfying the conditions in \autoref{thm:ARE-uniqueness}. Then the unique open-loop saddle
strategy of Problem (DG)$_{\scp\i}^{\scp0}$ (for the initial state $x$) is given by
$$ \begin{pmatrix}\bar u_1(t) \\ \bar u_2(t)\end{pmatrix} = \Th\bar X(t), \q t\ges0, $$
where $\Th$ is defined by \rf{def:Th} and $\bar X(\cd)$ is the solution to the closed-loop system
\begin{equation*}\left\{\begin{aligned}
d\bar X(t) &= (A+B\Th)\bar X(t)dt + (C+D\Th)\bar X(t)dW(t), \q t\ges 0, \\
 \bar X(0) &= x.
\end{aligned}\right.\end{equation*}
\end{corollary}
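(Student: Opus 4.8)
The plan is to observe that this corollary is really just a reading-off of the construction already carried out inside the proof of \autoref{thm:ARE-uniqueness}: the control pair $\Th\bar X(\cd)$ was exhibited there and shown to satisfy precisely the optimality criterion of \autoref{thm:Ginfty-optimality}. So the whole task reduces to collecting those pieces and quoting the if-and-only-if characterization of the saddle strategy.

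First I would let $P\in\dbS^n$ be the unique solution of the ARE \rf{ARE:game} furnished by \autoref{thm:ARE-uniqueness}, so that $\Th$ in \rf{def:Th} is well-defined, and I would let $\bar X(\cd)$ solve the displayed closed-loop system. By part (ii) of \autoref{thm:ARE-uniqueness}, $\Th$ is a stabilizer of $[A,C;B,D]$, so $[A+B\Th,C+D\Th]$ is $L^2$-stable; hence $\bar X(\cd)\in L_{\dbF}^2(0,\i;\dbR^n)$ and the candidate $\bar u(\cd)\deq\Th\bar X(\cd)\in\sU[0,\i)$ is admissible. This is exactly the admissibility step from the uniqueness proof.

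Next I would recall the two identities verified in that proof: with $\bar Y(t)\deq P\bar X(t)$ and $\bar Z(t)\deq P(C+D\Th)\bar X(t)$, the stationarity condition \rf{FBSDE-kehua} holds, and, using the ARE rewritten as $P(A+B\Th)+(A+B\Th)^\top P+(C+D\Th)^\top P(C+D\Th)+Q+S^\top\Th+\Th^\top S+\Th^\top R\Th=0$, the pair $(\bar Y(\cd),\bar Z(\cd))$ is the $L^2$-stable adapted solution of the BSDE \rf{BSDE-infty} driven by $\bar X(\cd)$ and $\bar u(\cd)$. Since these were already established, nothing new needs to be computed. Finally I would invoke \autoref{thm:Ginfty-optimality}, whose criterion states that a control pair is the unique open-loop saddle strategy of Problem (DG)$_{\scp\i}^{\scp0}$ for $x$ precisely when \rf{FBSDE-kehua} holds with the associated $L^2$-stable BSDE; as $\bar u(\cd)=\Th\bar X(\cd)$ meets both, it is the saddle strategy, and uniqueness is inherited directly from \autoref{thm:Ginfty-optimality}. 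There is no real obstacle here: the only point requiring attention is that $\Th$ be well-defined, which is guaranteed by the already-proved uniqueness of $P$, and the corollary follows at once from the observation that the verification inside the uniqueness proof is literally the optimality characterization of \autoref{thm:Ginfty-optimality}.
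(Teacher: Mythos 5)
Your proposal is correct and is essentially the paper's own argument: the paper introduces this corollary with the remark that it follows immediately from the proof of \autoref{thm:ARE-uniqueness}, where the admissibility of $\Th\bar X(\cd)$, the identities $\bar Y=P\bar X$, $\bar Z=P(C+D\Th)\bar X$, the stationarity condition \rf{FBSDE-kehua}, and the BSDE \rf{BSDE-infty} are all verified before invoking \autoref{thm:Ginfty-optimality}. No gaps.
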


We have shown in \autoref{thm:PTgoP} that the solution $P_{\scT}(\cd)$ to the differential
Riccati equation \rf{Ric:game} converges to some constant matrix $P$ as $T\to\i$.
In the subsequent result, we quantify the rate of this convergence.

\begin{theorem}\label{thm:PT-P}
Let {\rm(A1)--(A2)} hold. Let $P_{\scT}(\cd)$ be the unique strongly regular solution
to the differential Riccati equation \rf{Ric:game}, and let $P$ be as in \autoref{thm:ARE-uniqueness}.
There exist constants $K,\l>0$, independent of $T$, such that
$$ |P_{\scT}(t)-P| \les Ke^{-\l(T-t)}, \q\forall t\in[0,T]. $$
\end{theorem}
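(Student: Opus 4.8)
The plan is to exploit the group property \rf{PT:group-prop}, which gives $P_{\scT}(t)=P_{\scT-t}(0)$, so that if we set $\Pi(\tau)\deq P_\tau(0)$ for $\tau\ges0$, the desired estimate becomes $|\Pi(\tau)-P|\les Ke^{-\l\tau}$ for all $\tau\ges0$; substituting $\tau=T-t$ then yields the theorem, with constants automatically independent of $T$ since $\Pi(\cd)$ does not depend on $T$. Note that, for free, \autoref{thm:PTgoP} and \rf{PT:uni-bound} already tell us that $\Pi(\tau)\to P$ as $\tau\to\i$ and that $|\Pi(\tau)|\les K$ uniformly in $\tau$; the whole task is to upgrade this qualitative convergence to an exponential rate.

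Reversing time in \rf{Ric:game} through the group property shows that $\Pi(\cd)$ solves the forward problem $\Pi'(\tau)=\cR(\Pi(\tau))$, $\Pi(0)=0$, where $\cR(\cd)$ denotes the Riccati operator on the left-hand side of \rf{ARE:game}. Since $\cR(P)=0$, the difference $\D(\tau)\deq\Pi(\tau)-P$ satisfies $\D'(\tau)=\cR(\Pi(\tau))-\cR(P)$. The algebraic heart of the argument is a completing-the-square identity relating two Riccati values: with $\Th$ the constant stabilizer of \rf{def:Th}, $\hat A\deq A+B\Th$, $\hat C\deq C+D\Th$, and $\Th_\tau\deq-[R+D^\top\Pi(\tau)D]^{-1}[B^\top\Pi(\tau)+D^\top\Pi(\tau)C+S]$ the gain attached to $\Pi(\tau)$ (so $\Th_\tau\to\Th$), one verifies, writing $\D$ for $\D(\tau)$,
$$\cR(\Pi(\tau))-\cR(P)=\D\hat A+\hat A^\top\D+\hat C^\top\D\hat C-(\Th_\tau-\Th)^\top[R+D^\top\Pi(\tau)D](\Th_\tau-\Th).$$
Thus $\D(\cd)$ obeys a linear Lyapunov equation driven by the closed-loop pair $[\hat A,\hat C]$ plus a remainder $G(\tau)$ equal to the last, quadratic, term. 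Because $R+D^\top\Pi(\tau)D$ is uniformly bounded (by \rf{PT:uni-bound}) with uniformly bounded inverse (\autoref{crllry:unibound:R+DPD}), the map $P\mapsto$ gain is Lipschitz on the relevant bounded set, so that $|\Th_\tau-\Th|\les C|\D(\tau)|$ and hence $|G(\tau)|\les C|\D(\tau)|^2$.

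Next I would record that, by \autoref{thm:PTgoP}{\rm(iii)}, $\Th$ is a stabilizer, so $[\hat A,\hat C]$ is $L^2$-stable and, by \autoref{crllry:stability}, mean-square exponentially stable; this is precisely the statement that the Lyapunov operator $\dbL(X)\deq X\hat A+\hat A^\top X+\hat C^\top X\hat C$ generates a semigroup on $\dbS^n$ with $\|e^{\tau\dbL}\|\les Ke^{-\l\tau}$ (its adjoint governs the second moments of $[\hat A,\hat C]$, whose exponential decay is the content of \autoref{crllry:stability}). Variation of constants applied to $\D'=\dbL(\D)+G$ with $\D(0)=-P$ then yields the integral inequality
$$|\D(\tau)|\les Ke^{-\l\tau}|P|+C\int_0^\tau e^{-\l(\tau-r)}|\D(r)|^2\,dr,\q\tau\ges0.$$

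The main obstacle is the quadratic remainder: since the datum $\D(0)=-P$ need not be small, a naive linearization-and-Gronwall argument would permit exponential growth. I would instead invoke the qualitative convergence $\D(\tau)\to0$ (from \autoref{thm:PTgoP}) to select a $T$-independent threshold $\tau_0$ with $C|\D(r)|\les\l/2$ for all $r\ges\tau_0$. Splitting the integral at $\tau_0$, I bound the contribution on $[0,\tau_0]$ by a constant multiple of $e^{-\l\tau}$ (using $|\D|\les2K$ from \rf{PT:uni-bound}), while on $[\tau_0,\tau]$ the factor $C|\D(r)|$ is absorbed to give $C\int_{\tau_0}^\tau e^{-\l(\tau-r)}|\D(r)|^2dr\les\frac\l2\int_{\tau_0}^\tau e^{-\l(\tau-r)}|\D(r)|\,dr$. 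Applying Gronwall's inequality to $g(\tau)\deq e^{\l\tau}|\D(\tau)|$ on $[\tau_0,\i)$ then produces $|\D(\tau)|\les K'e^{-\frac\l2\tau}$ for $\tau\ges\tau_0$, and the bound on $[0,\tau_0]$ is trivial by boundedness of $\D$; relabeling $\l/2$ as $\l$ and translating back via $\tau=T-t$ gives $|P_{\scT}(t)-P|\les Ke^{-\l(T-t)}$ with all constants independent of $T$, as required.
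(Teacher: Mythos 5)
Your proposal is correct and follows essentially the same route as the paper's proof: the same completing-the-square identity turning $\cR(\Pi)-\cR(P)$ into a Lyapunov operator in the closed-loop pair $[A+B\Th,C+D\Th]$ plus a remainder quadratic in the difference, the same use of mean-square exponential stability for the homogeneous part, and the same use of the already-established qualitative convergence $P_\tau(0)\to P$ to enter a $T$-independent neighborhood where the quadratic term can be absorbed. The only (harmless) deviation is in closing the integral inequality: you linearize the quadratic remainder on the tail ($C|\D|^2\les\tfrac{\l}{2}|\D|$ for $\tau\ges\tau_0$) and apply linear Gronwall, whereas the paper solves the quadratic integral inequality directly via an ODE comparison for $h(t)\les\l/2+\int_0^t e^{-\l s}h(s)^2ds$; both yield the stated exponential rate with $T$-independent constants.
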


\begin{proof}
Set $\Si_{\scT}(t)\deq P-P_{\scT}(t)$. Then a direct computation shows that
\begin{align*}
& \dot\Si_{\scT}(t) +\Si_{\scT}(t)(A+B\Th)+(A+B\Th)^\top \Si_{\scT}(t) + (C+D\Th)^\top \Si_{\scT}(t)(C+D\Th) \\
&\hp{\Si_{\scT}(t)} +[\Th-\Th_{\scT}(t)]^\top[R+D^\top P_{\scT}(t)D][\Th-\Th_{\scT}(t)]=0,
\end{align*}
where $\Th_{\scT}(t)$ and $\Th(t)$ are defined by
$$ \Th_{\scT}(t) \deq -[R+D^\top P_{\scT}(t)D]^{-1}[B^\top P_{\scT}(t)+D^\top P_{\scT}(t)C+S] $$
and
$$ \Th \deq -(R+D^\top PD)^{-1}(B^\top P+D^\top PC+S), $$
respectively. Observe that
\begin{align*}
\Th-\Th_{\scT}(t)
&= -[R+D^\top P_{\scT}(t)D]^{-1}[B^\top\Si_{\scT}(t)+D^\top\Si_{\scT}(t)(C+D\Th)].
\end{align*}
Thus, by \autoref{crllry:unibound:R+DPD},
\begin{align}\label{Lambda:guji}
|[\Th-\Th_{\scT}(t)]^\top[R+D^\top P_{\scT}(t)D][\Th-\Th_{\scT}(t)]|
&\les K_1 |\Si_{\scT}(t)|^2, \q\forall 0\les t\les T<\i
\end{align}
for some constant $K_1>0$ that is independent of $T$.
Let $\F(\cd)$ be the solution to the following matrix SDE:
$$\left\{\begin{aligned}
d\F(t) &= (A+B\Th)\F(t)dt + (C+D\Th)\F(t)dW(t),\q t\ges0, \\
 \F(0) &= I_n.
\end{aligned}\right.$$
By \autoref{thm:ARE-uniqueness}, the system $[A+B\Th,C+D\Th]$ is $L^2$-stable.
Thus, by \autoref{crllry:stability}, there exist constants $K_2,\l>0$ such that
\begin{align}\label{F(s)F(t):guji}
\dbE|\F(s)\F(t)^{-1}|^2 = \dbE|\F(s-t)|^2 \les K_2 e^{-\l(s-t)}, \q\forall s\ges t\ges0.
\end{align}
By \autoref{thm:PTgoP}, we can choose an integer $N>0$ such that
\begin{align}\label{guji}
K_2|\Si_{\scT}(0)| \les \rho\deq {\l\over2K_1K_2}, \q\forall T\ges N.
\end{align}
Now we claim that for any $T\ges 2N$, the inequality
\begin{align}\label{Si:bound}
|\Si_{\scT}(t)| \les Ke^{-\l(T-t)}, \q\forall t\in[0,T]
\end{align}
holds for some constant $K>0$ independent of $T$.
To prove this claim, let $T\ges 2N$ be fixed but arbitrary.
Let $k\ges N$ be the integer such that
$$ N+k \les T < N+k+1. $$
By the proof of \cite[Lemma 7.3, Chapter 6]{Yong-Zhou1999}, we see that for any $t\les k$,
\begin{align}\label{Rep:SiT}
\Si_{\scT}(t)
&= \dbE\bigg\{\int_t^{k}[\F(s)\F(t)^{-1}]^\top[\Th-\Th_{\scT}(s)]^\top
   [R+D^\top P_{\scT}(s)D][\Th-\Th_{\scT}(s)][\F(s)\F(t)^{-1}]ds \nn\\
&\hp{=\ } +[\F(k)\F(t)^{-1}]^\top \Si_{\scT}(k)[\F(k)\F(t)^{-1}]\Big\},
\end{align}
which, together with \rf{PT:group-prop} and \rf{Lambda:guji}--\rf{guji}, implies that
\begin{align*}
|\Si_{\scT}(t)|
&\les |\Si_{\scT}(k)|\cd\dbE|\F(k)\F(t)^{-1}|^2 + K_1\int_t^{k}\dbE|\F(s)\F(t)^{-1}|^2\cd|\Si_{\scT}(s)|^2ds \\
&= |\Si_{\scT-k}(0)|\cd\dbE|\F(k)\F(t)^{-1}|^2 + K_1\int_t^{k}\dbE|\F(s)\F(t)^{-1}|^2\cd|\Si_{\scT}(s)|^2ds \\
&\les \rho e^{-\l(k-t)}+ K_1K_2\int_t^{k}e^{-\l(s-t)}|\Si_{\scT}(s)|^2ds, \q\forall 0\les t\les k.
\end{align*}
Set $K_3\deq K_1K_2$ and
$$ h(t) \deq K_3 e^{\l t}|\Si_{\scT}(k-t)|, \q 0\les t\les k. $$
Then for any $0\les t\les k$,
$$ h(t) \les K_3\rho + \int_0^t e^{-\l s}h(s)^2ds = {\l\over2} + \int_0^t e^{-\l s}h(s)^2ds. $$
Set for $t\in[0,k]$,
$$ H(t)\deq \int_0^t e^{-\l s}h(s)^2ds. $$
Then $H(0)=0$ and
$$ e^{\l t} H'(t) = h(t)^2 \les [{\l/2} + H(t)]^2, \q\forall t\in[0,k]. $$
or equivalently,
$$d\lt[{-1\over \l/2+H(t)}\rt] \les e^{-\l t}, \q\forall t\in[0,k]. $$
Integration gives
$${2\over\l} - {1\over \l/2+H(t)} \les \int_0^t e^{-\l s}ds \les {1\over \l}, \q\forall t\in[0,k], $$
from which we have
$$ h(t) \les \l,  \q\forall t\in[0,k]. $$
Thus (noting that $T-k<N+1$),
\begin{align}\label{Si:bound-1}
|\Si_{\scT}(t)|
&= {1\over K_3}e^{-\l(k-t)} h(k-t) \les {\l\over K_3}e^{-\l(k-t)} = {\l\over K_3}e^{\l(T-k)}e^{-\l(T-t)} \nn\\
&\les {\l\over K_3}e^{\l(N+1)}e^{-\l(T-t)}, \q\forall t\in[0,k].
\end{align}
For $t\in[k,T]$, we have $0\les T-t\les T-k \les N+1$. Thus,
\begin{align}\label{Si:bound-2}
|\Si_{\scT}(t)| &= |\Si_{\scT-t}(0)| \les K_4\deq \max_{s\in[0,N+1]}|\Si_{s}(0)| \nn\\
&\les K_4e^{\l(N+1)} e^{-\l(T-t)},  \q\forall t\in[k,T].
\end{align}
Combining \rf{Si:bound-1} and \rf{Si:bound-2} yields the desired \rf{Si:bound}.
It remains to show that \rf{Si:bound} also holds for $T<2N$ with a possibly
different constant $K>0$ that is independent of $T$. This can be proved in
a similar way as \rf{Si:bound-2}:
\begin{align*}
|\Si_{\scT}(t)| &= |\Si_{\scT-t}(0)| \les  K_5 \deq \max_{s\in[0,2N]}|\Si_{s}(0)| \nn\\
&\les \big(K_5e^{2N\l}\big) e^{-\l(T-t)},  \q\forall 0\les t\les T\les 2N.
\end{align*}
The proof is complete.
\end{proof}

\section{The Turnpike Property}\label{Sec:TP}

With the above preparations, we are ready to establish the turnpike property
for Problem (DG)$_{\scT}$. This will be done in the current section.
Denote by $\cP(\dbR^n)$ the set of probability distributions on $(\dbR^n,\sB(\dbR^n))$
having finite second moment, i.e., if $\mu\in\cP(\dbR^n)$, then $\int_{\dbR^n}|x|^2\mu(dx)<\i$.
We equip $\cP(\dbR^n)$ with the $L^2$-Wasserstein distance:
$$ d(\mu_1,\mu_2) \deq \inf\lt\{\sqrt{\dbE|\xi_1-\xi_2|^2}\Bigm|
\xi_i\text{ is an $\dbR^n$-valued random variable with }\mu_{\scp\xi_i}=\mu_i; ~i=1,2 \rt\},$$
where $\mu_{\scp\xi_i}$ denotes the probability distribution of $\xi_i$.
Consider the following SDE over $[0,\i)$:
\begin{equation}\label{SDE:TP-limit}
d\cX(t) = [\cA\cX(t)+\a]dt + [\cC\cX(t)+\b]dW(t),
\end{equation}
where $\cA,\cC\in\dbR^{n\times n}$, and $\a,\b\in\dbR^n$ are constant valued.

\ms

The following result, found in \cite[Proposition 4.2]{Sun-Yong2024JDE},
establishes the uniqueness and existence of a stationary solution to \rf{SDE:TP-limit}.

\begin{lemma}\label{lmm:SDE-stationary}
Suppose that the system $[\cA,\cC]$ is $L^2$-stable. Then there exists a unique
distribution $\mu$ with finite second moment such that the solution $\cX(\cd)$
of \rf{SDE:TP-limit} with initial distribution $\mu$ has a stationary distribution.
Namely, for any Borel set $\G$ in $\dbR^n$,
$$
\dbP[\cX(t)\in\G]=\mu(\G), \q\forall t\ges0.
$$
\end{lemma}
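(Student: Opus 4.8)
The plan is to realize the stationary distribution as the unique fixed point of the transition semigroup acting on the quadratic Wasserstein space, using the mean-square exponential stability of $[\cA,\cC]$ to turn the semigroup into a contraction. For $\mu\in\cP(\dbR^n)$, let $\cS_t\mu$ denote the law of the solution $\cX(t)$ of \rf{SDE:TP-limit} with initial law $\cX(0)\sim\mu$ (where $\cX(0)$ is $\sF_0$-measurable and independent of $W$). By \autoref{prop:SDE-bound} applied with the constant inhomogeneities $b(\cd)\equiv\a$ and $\si(\cd)\equiv\b$, each $\cX(t)$ has a finite second moment whenever $\cX(0)$ does, so $\cS_t$ maps $\cP(\dbR^n)$ into itself, and $\{\cS_t\}_{t\ges0}$ is a semigroup by uniqueness of solutions. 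A distribution $\mu$ stationary in the sense of the statement is precisely a common fixed point of all the $\cS_t$.

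The key step is a contraction estimate for $\cS_t$. Given $\mu_1,\mu_2\in\cP(\dbR^n)$, I would choose $\sF_0$-measurable random variables $\xi_1,\xi_2$ realizing an optimal coupling, so that $\dbE|\xi_1-\xi_2|^2=d(\mu_1,\mu_2)^2$ with each $\xi_i$ independent of $W$. Driving two copies $\cX(\cd\,;\xi_1)$ and $\cX(\cd\,;\xi_2)$ of \rf{SDE:TP-limit} by the \emph{same} Brownian motion, their difference $\D(t)\deq\cX(t;\xi_1)-\cX(t;\xi_2)$ solves the homogeneous equation $d\D=\cA\D\,dt+\cC\D\,dW$ with $\D(0)=\xi_1-\xi_2$. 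Conditioning on $(\xi_1,\xi_2)$ and invoking the mean-square exponential stability of \autoref{crllry:stability} pointwise yields $\dbE|\D(t)|^2\les Ke^{-\l t}\dbE|\xi_1-\xi_2|^2$. Since $(\cX(t;\xi_1),\cX(t;\xi_2))$ is an admissible coupling of $\cS_t\mu_1$ and $\cS_t\mu_2$, this gives
$$ d(\cS_t\mu_1,\cS_t\mu_2)\les\sqrt{K}\,e^{-\l t/2}\,d(\mu_1,\mu_2),\q\forall t\ges0. $$

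With the contraction established, the conclusion follows from a fixed-point argument. Fix $t_0>0$ large enough that $\sqrt{K}\,e^{-\l t_0/2}<1$; then $\cS_{t_0}$ is a strict contraction on the complete metric space $(\cP(\dbR^n),d)$, so by the Banach fixed-point theorem it admits a unique fixed point $\mu$. To promote this to invariance under the full semigroup, I use the semigroup identity $\cS_{t_0}(\cS_t\mu)=\cS_t(\cS_{t_0}\mu)=\cS_t\mu$, which shows $\cS_t\mu$ is again a fixed point of $\cS_{t_0}$ and hence equals $\mu$ for every $t\ges0$; this is exactly the stated stationarity $\dbP[\cX(t)\in\G]=\mu(\G)$. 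Uniqueness is immediate, since any stationary distribution is in particular fixed by $\cS_{t_0}$ and therefore coincides with $\mu$.

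The genuinely delicate points, which I expect to be the main obstacle, are verifying (a) that the synchronous coupling gives a legitimate coupling of $\cS_t\mu_1$ and $\cS_t\mu_2$, so that $d(\cS_t\mu_1,\cS_t\mu_2)^2\les\dbE|\D(t)|^2$ is valid, and that the deterministic bound of \autoref{crllry:stability} transfers to random $\sF_0$-measurable initial data by conditioning; and (b) that $\cS_t$ preserves finite second moments, so that the fixed-point argument genuinely takes place in $\cP(\dbR^n)$. Both rest on \autoref{prop:SDE-bound} together with the independence of the initial data from $W$, and the one external fact I would cite from the standard theory is the completeness of the quadratic Wasserstein space $(\cP(\dbR^n),d)$.
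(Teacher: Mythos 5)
Your argument is correct, but note that the paper does not actually prove this lemma: it is quoted verbatim from an external reference (\cite[Proposition 4.2]{Sun-Yong2024JDE}), so there is no in-paper proof to compare against. What you have written is the standard and essentially canonical argument for invariant measures of exponentially stable linear SDEs --- synchronous coupling to reduce the difference of two solutions to the homogeneous system $[\cA,\cC]$, the mean-square exponential stability of \autoref{crllry:stability} to make the transition semigroup a strict contraction on $(\cP(\dbR^n),d)$ for large time, Banach's fixed point theorem, and the commutation $\cS_{t_0}\cS_t=\cS_t\cS_{t_0}$ to upgrade the fixed point of a single map to invariance under the whole semigroup --- and all the steps check out: the difference process is $\D(t)=\F(t)\D(0)$ with $\F$ the fundamental matrix independent of the initial data, so conditioning on the initial $\si$-algebra transfers the deterministic bound as you claim, and the linear-in-$t$ (in fact uniform-in-$t$, after the Gronwall step inside \autoref{prop:SDE-bound}) second-moment bound keeps $\cS_t$ acting on $\cP(\dbR^n)$. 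The one point worth making explicit is that under the paper's standing convention $\dbF$ is the augmented natural filtration of $W$, so $\sF_0$ is trivial and cannot support a nondegenerate $\sF_0$-measurable initial datum; one must tacitly enlarge the space (or $\sF_0$) to realize the optimal coupling. The paper commits the same abuse in the proof of \autoref{prop:SDE-TP}, so this is a shared convention rather than a gap in your argument.
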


Next, consider, for each $T>0$, the following SDE over $[0,T]$:
\begin{equation}\label{SDE:TP}\left\{\begin{aligned}
d\cX(t) &= [\cA_{\scT}(t)\cX(t)+\a_{\scT}(t)]dt + [\cC_{\scT}(t)\cX(t)+\b_{\scT}(t)]dW(t), \\
 \cX(0) &= x,
\end{aligned}\right.\end{equation}
where $\cA_{\scT},\cC_{\scT}:[0,T]\to\dbR^{n\times n}$, and $\a_{\scT},\b_{\scT}:[0,T]\to\dbR^n$
are deterministic Lebesgue measurable, bounded functions.
We denote by $\cX_{\scT}(\cd\,;x)$ the solution of \rf{SDE:TP} over $[0,T]$ and by
$\mu_{\scT}(t;x)$ the probability distribution of $\cX_{\scT}(t;x)$.

\begin{proposition}\label{prop:SDE-TP}
Suppose that the system $[\cA,\cC]$ is $L^2$-stable, and let $\mu$ be the unique distribution
in \autoref{lmm:SDE-stationary}. If there exist constants $K,\l>0$, independent of $T$, such that
\begin{equation}\label{cdtn:SDE-TP}
|\cA_{\scT}(t)-\cA|+|\cC_{\scT}(t)-\cC|+|\a_{\scT}(t)-\a|+|\b_{\scT}(t)-\b|
\les K\[e^{-\l t}+ e^{-\l(T-t)}\], \q\forall t\in[0,T],
\end{equation}
then for any initial state $x\in\dbR^n$,
$$
d(\mu_{\scT}(t;x),\mu) \les K\big(|x|^2+1\big)\[e^{-\l t}+e^{-\l(T-t)}\], \q\forall t\in[0,T],
$$
with possibly different constants $K,\l>0$ that are independent of $x$ and $T$.
\end{proposition}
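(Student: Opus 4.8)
The plan is to estimate the Wasserstein distance by a synchronous coupling. Let $\wt\cX(\cd)$ be the stationary solution of the limit equation \rf{SDE:TP-limit} furnished by \autoref{lmm:SDE-stationary}, with $\wt\cX(0)$ an $\sF_0$-measurable random variable of law $\mu$, and drive it by the \emph{same} Brownian motion $W$ as $\cX_{\scT}(\cd\,;x)$; then every marginal of $\wt\cX$ equals $\mu$, and since $(\cX_{\scT}(t;x),\wt\cX(t))$ is an admissible coupling of $\mu_{\scT}(t;x)$ and $\mu$, we get $d(\mu_{\scT}(t;x),\mu)\les\sqrt{\dbE|\cX_{\scT}(t;x)-\wt\cX(t)|^2}$. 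Subtracting the two state equations, the difference $\D_{\scT}(t)\deq\cX_{\scT}(t;x)-\wt\cX(t)$ solves, on $[0,T]$, a linear SDE governed by the $L^2$-stable pair $[\cA,\cC]$,
$$ d\D_{\scT}(t)=[\cA\D_{\scT}(t)+b_{\scT}(t)]dt+[\cC\D_{\scT}(t)+\si_{\scT}(t)]dW(t), $$
with forcing $b_{\scT}(t)\deq[\cA_{\scT}(t)-\cA]\cX_{\scT}(t;x)+\a_{\scT}(t)-\a$, $\si_{\scT}(t)\deq[\cC_{\scT}(t)-\cC]\cX_{\scT}(t;x)+\b_{\scT}(t)-\b$, and $\dbE|\D_{\scT}(0)|^2\les 2|x|^2+2\int_{\dbR^n}|y|^2\mu(dy)\les K(|x|^2+1)$. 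The whole problem thus reduces to an $L^2$ estimate for this stable SDE perturbed by $b_{\scT},\si_{\scT}$.

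The crucial point is that the crude bound \rf{Es:1} is not sharp enough: its \emph{unweighted} integral term would only produce a uniform-in-$t$ bound, not decay. Instead I would use the \emph{exponentially weighted} estimate that appears inside the proof of \autoref{prop:SDE-bound}, namely, taking $P\in\dbS^n_+$ with $P\cA+\cA^\top P+\cC^\top P\cC+2I=0$ and applying It\^o's rule to $\langle P\D_{\scT},\D_{\scT}\rangle$ (with the deterministic initial datum replaced by $\dbE|\D_{\scT}(0)|^2$, the computation being unchanged),
$$ \dbE|\D_{\scT}(t)|^2\les Ke^{-\l t}\dbE|\D_{\scT}(0)|^2+K\int_0^t e^{-\l(t-s)}\dbE\big[|b_{\scT}(s)|^2+|\si_{\scT}(s)|^2\big]ds. $$
To exploit this I first need a $T$-uniform second-moment bound $\sup_{t\in[0,T]}\dbE|\cX_{\scT}(t;x)|^2\les K(|x|^2+1)$. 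It follows from the same weighted estimate applied to $\cX_{\scT}$ itself (a stable SDE perturbed by $[\cA_{\scT}-\cA]\cX_{\scT}+\a_{\scT}$ and $[\cC_{\scT}-\cC]\cX_{\scT}+\b_{\scT}$): using \rf{cdtn:SDE-TP} and the boundedness of $\a_{\scT},\b_{\scT}$, one reaches $\dbE|\cX_{\scT}(t;x)|^2\les K(|x|^2+1)+\int_0^t g(s)\,\dbE|\cX_{\scT}(s;x)|^2ds$ with $g(s)\deq K[e^{-\l s}+e^{-\l(T-s)}]^2$ (here $e^{-\l(t-s)}\les1$ is used), and since $\int_0^T g(s)ds$ is bounded \emph{uniformly in $T$}, Gronwall's inequality closes the bound.

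With this moment bound, \rf{cdtn:SDE-TP} yields $\dbE[|b_{\scT}(s)|^2+|\si_{\scT}(s)|^2]\les K(|x|^2+1)[e^{-\l s}+e^{-\l(T-s)}]^2$, and the weighted estimate for $\D_{\scT}$ reduces to the convolution integrals
$$ \int_0^t e^{-\l(t-s)}e^{-2\l s}ds\les\ts\frac1\l e^{-\l t},\qquad \int_0^t e^{-\l(t-s)}e^{-2\l(T-s)}ds\les\ts\frac1{3\l}e^{-2\l(T-t)}, $$
whence $\dbE|\D_{\scT}(t)|^2\les K(|x|^2+1)[e^{-\l t}+e^{-\l(T-t)}]$. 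Taking square roots, using $\sqrt{a+b}\les\sqrt a+\sqrt b$ and $\sqrt{|x|^2+1}\les|x|^2+1$, and halving $\l$ gives the asserted inequality with constants independent of $x$ and $T$. The main obstacle is the second paragraph: one must bypass the non-sharp \rf{Es:1}, retain the exponential weight, and above all secure the $T$-uniform second moment of $\cX_{\scT}$, which is precisely where the uniform $L^1(0,T)$-smallness of the coefficient deviations in \rf{cdtn:SDE-TP} is indispensable.
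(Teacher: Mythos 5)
Your proof is correct, and it follows the same overall strategy as the paper (synchronous coupling with the stationary solution from \autoref{lmm:SDE-stationary}, the Lyapunov matrix $\cP$ furnished by $L^2$-stability, and an exponentially weighted Gronwall argument), but with a genuinely different decomposition of the error dynamics. You keep the \emph{limit} pair $[\cA,\cC]$ as the homogeneous part of the equation for $\D_{\scT}=\cX_{\scT}-\wt\cX$ and push $[\cA_{\scT}-\cA]\cX_{\scT}$, $[\cC_{\scT}-\cC]\cX_{\scT}$ into the forcing; this forces you to first establish the $T$-uniform bound $\sup_{t\in[0,T]}\dbE|\cX_{\scT}(t;x)|^2\les K(|x|^2+1)$, which you correctly obtain from the uniform integrability $\int_0^T[e^{-\l s}+e^{-\l(T-s)}]^2ds\les K$ and Gronwall. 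The paper instead keeps the \emph{perturbed} coefficients $\cA_{\scT},\cC_{\scT}$ acting on the difference and puts $[\cA_{\scT}-\cA]\cX(t)$, $[\cC_{\scT}-\cC]\cX(t)$ into the forcing, where $\cX$ is the stationary process whose second moment is time-invariant; this makes the forcing estimate immediate with no a priori bound on $\cX_{\scT}$, at the price of a Lyapunov drift of the form $-I+O(\psi(t,T))$ and a Gronwall inequality with a time-dependent rate (again closed by $\int_0^T\psi\les 2/\l$). Your observation that the stated estimate \rf{Es:1} is too crude and that one must retain the exponential weight $e^{-\l(t-s)}$ from inside the proof of \autoref{prop:SDE-bound} is exactly right, and your convolution computations and the final square-root step match the paper's conclusion.
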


\begin{proof}
Let $\xi$ be an $\sF_0$-measurable, $\dbR^n$-valued random variable with distribution $\mu$,
and let $\cX(\cd)$ be the solution of \rf{SDE:TP-limit} with initial condition $\cX(0)=\xi$.
Then for any $t\ges0$, the distribution of $\cX(t)$ is identically equal to $\mu$.
Define, for $t\in[0,T]$,
\begin{align*}
& \cY_{\scT}(t;x)\deq\cX_{\scT}(t;x)-\cX(t),\\
& \h\a_{\scT}(t)\deq[\cA_{\scT}(t)-\cA]\cX(t)+\a_{\scT}(t)-\a,\\
& \h\b_{\scT}(t)\deq[\cC_{\scT}(t)-\cC]\cX(t)+\b_{\scT}(t)-\b.
\end{align*}
Then $\cY_{\scT}(0;x)=x-\xi$ and
$$ d\cY_{\scT}(t;x) = [\cA_{\scT}(t)\cY_{\scT}(t;x) + \h\a_{\scT}(t)]dt
+ [\cC_{\scT}(t)\cY_{\scT}(t;x)+\h\b_{\scT}(t)]dW(t). $$
Since $[\cA,\cC]$ is $L^2$-stable, by \cite[Theorem 3.2.3]{Sun-Yong2020book-a},
there exists a matrix $\cP\in\dbS^n_+$ such that
$$ \cP\cA+\cA^\top\cP+\cC^\top\cP\cC=-I_n. $$
By applying It\^{o}'s rule to $t\mapsto\lan\cP\cY_{\scT}(t;x),\cY_{\scT}(t;x)\ran$, we have
\begin{equation}\label{EVT=}\begin{aligned}
& {d\over dt}\dbE\lan\cP\cY_{\scT}(t;x),\cY_{\scT}(t;x)\ran
  =\dbE\Big\{2\lan\cP\cY_{\scT}(t;x),\cA_{\scT}(t)\cY_{\scT}(t;x)+\h\a_{\scT}(t)\ran \\
&\qq+\lan\cP[\cC_{\scT}(t)\cY_{\scT}(t;x)+\h\b_{\scT}(t)],\cC_{\scT}(t)\cY_{\scT}(t;x)
    +\h\b_{\scT}(t)\ran\Big\} \\
&\q=\dbE\Big\{\lan[\cP\cA_{\scT}(t)+\cA_{\scT}(t)^\top\cP+\cC_{\scT}(t)^\top\cP\cC_{\scT}(t)]
    \cY_{\scT}(t;x),\cY_{\scT}(t;x)\ran \\
&\qq+2\lan\cY_{\scT}(t;x),\cP\h\a_{\scT}(t)+\cC_{\scT}(t)^\top\cP\h\b_{\scT}(t)\ran
    +\lan\cP\h\b_{\scT}(t),\h\b_{\scT}(t)\ran \Big\}.
\end{aligned}\end{equation}
To estimate the right-hand side of \rf{EVT=}, we first observe that
\begin{align*}
& \cP\cA_{\scT}(t)+\cA_{\scT}(t)^\top\cP+\cC_{\scT}(t)^\top\cP\cC_{\scT}(t) \\
&\q= \cP\cA+\cA^\top\cP+\cC^\top\cP\cC + \cP[\cA_{\scT}(t)-\cA]+[\cA_{\scT}(t)-\cA]^\top\cP \\
&\q\hp{=\ } +[\cC_{\scT}(t)-\cC]^\top\cP\cC_{\scT}(t)+\cC^\top\cP[\cC_{\scT}(t)-\cC] \\
&\q= -I_n + \cP[\cA_{\scT}(t)-\cA]+[\cA_{\scT}(t)-\cA]^\top\cP+[\cC_{\scT}(t)-\cC]^\top\cP\cC_{\scT}(t)
     +\cC^\top\cP[\cC_{\scT}(t)-\cC],
\end{align*}
which, together with \rf{cdtn:SDE-TP}, implies that
\begin{equation}\label{sub:1}\begin{aligned}
& \dbE\lan[\cP\cA_{\scT}(t)+\cA_{\scT}(t)^\top\cP+\cC_{\scT}(t)^\top\cP
  \cC_{\scT}(t)]\cY_{\scT}(t;x),\cY_{\scT}(t;x)\ran\\
&\q\les [-1+K\psi(t,T)]\,\dbE|\cY_{\scT}(t;x)|^2, \q\forall t\in[0,T]
\end{aligned}\end{equation}
for some constant $K>0$ independent of $T$ and $x$, where
$$ \psi(t,T)\deq e^{-\l t}+e^{-\l(T-t)}, \q t\in[0,T]. $$
Further, since $\dbE|\cX(t)|^2$ is finite and time-invariant by \autoref{lmm:SDE-stationary},
we have (for possibly a different $K>0$ that is independent of $T$ and $x$)
\begin{align*}
\dbE|\h\a_{\scT}(t)|^2
&=\dbE\big|[\cA_{\scT}(t)-\cA]\cX(t)+\a_{\scT}(t)-\a\big|^2 \les K\psi(t,T), \q\forall t\in[0,T], \\
\dbE|\h\b_{\scT}(t)|^2
&=\dbE\big|[\cC_{\scT}(t)-\cC]\cX(t)+\b_{\scT}(t)-\b\big|^2 \les K\psi(t,T), \q\forall t\in[0,T].
\end{align*}
Consequently,
\begin{equation}\label{sub:2}\begin{aligned}
& \dbE\[2\lan\cY_{\scT}(t;x),\cP\h\a_{\scT}(t)+\cC_{\scT}(t)^\top\cP\h\b_{\scT}(t)\ran
  +\lan\cP\h\b_{\scT}(t),\h\b_{\scT}(t)\ran\]\\
&\q\les \dbE\lt[{1\over2}|\cY_{\scT}(t;x)|^2+2|\cP\h\a_{\scT}(t)+\cC_{\scT}(t)^\top\cP\h\b_{\scT}(t)|^2
        +\lan\cP\h\b_{\scT}(t),\h\b_{\scT}(t)\ran\rt]\\
&\q\les {1\over2}\dbE|\cY_{\scT}(t;x)|^2+K\psi(t,T), \q\forall t\in[0,T]
\end{aligned}\end{equation}
for some constant $K>0$ independent of $T$ and $x$.
Let $\g_1>0$ and $\g_2>0$ be the smallest and largest eigenvalues of $\cP$, respectively.
Substituting \rf{sub:1}--\rf{sub:2} into \rf{EVT=} yields
\begin{equation}\label{EVT<=}\begin{aligned}
{d\over dt}\dbE\lan\cP\cY_{\scT}(t;x),\cY_{\scT}(t;x)\ran
&\les \[-{1\over2}+K\psi(t,T)\]\dbE|\cY_{\scT}(t;x)|^2+K\psi(t,T) \\
&\les \[-{1\over2\g_1}+{K\over\g_1}\psi(t,T)\]\dbE\lan\cP\cY_{\scT}(t;x),\cY_{\scT}(t;x)\ran + K\psi(t,T) \\
&\les \big[-\rho+K\psi(t,T)\big]\dbE\lan\cP\cY_{\scT}(t;x),\cY_{\scT}(t;x)\ran + K\psi(t,T),
\end{aligned}\end{equation}
where the constant $K>0$ at the end might be different, and $\rho<\min\{{1\over2\g_2},\l\}$.
By the differential form of the Gronwall inequality,
\begin{align*}
\dbE\lan\cP\cY_{\scT}(t;x),\cY_{\scT}(t;x)\ran
&\les \dbE\lan\cP\cY_{\scT}(0;x),\cY_{\scT}(0;x)\ran\exp\lt(\int_0^t\big[-\rho+K\psi(s,T)\big]ds\rt) \\
&\hp{=\ } +\int_0^tK\psi(s,T)\exp\lt(\int_s^t\big[-\rho+K\psi(r,T)\big]dr\rt)ds.
\end{align*}
Note that
$$\int_s^t\big[-\rho+K\psi(r,T)\big]dr \les -\rho(t-s)+{2K\over\l} $$
leading to
$$\exp\lt(\int_s^t\big[-\rho+K\psi(r,T)\big]dr\rt) \les e^{2K\over\l}e^{-\rho(t-s)},
\q\forall 0\les s\les t\les T.$$
Also, we have (recalling $\rho<\l$)
\begin{align*}
& \int_0^tK\psi(s,T)\exp\lt(\int_s^t\big[-\rho+K\psi(r,T)\big]dr\rt)ds \\
&\q\les Ke^{2K\over\l}\int_0^t\(e^{-\l s}+e^{-\l(T-s)}\)e^{-\rho(t-s)}ds \\
&\q= Ke^{2K\over\l}\lt[{1\over\l-\rho}\big(e^{-\rho t}-e^{-\l t}\big)
     + {e^{-\l T}\over\l+\rho}\big(e^{\l t}-e^{-\rho t}\big)\rt]  \\
&\q\les Ke^{2K\over\l}\lt[{1\over\l-\rho}e^{-\rho t} + {1\over \l+\rho}e^{-\l(T-t)}\rt] \\
&\q\les K\[e^{-\rho t}+e^{-\rho(T-t)}\],
\end{align*}
where in the last inequality, $K>0$ is a possible different constant,
independent of $T>0$ and $x\in\dbR^n$. Consequently,
\begin{align*}
& \dbE|\cX_{\scT}(t;x)-\cX(t)|^2 = \dbE|\cY_{\scT}(t;x)|^2
  \les {1\over\g_1}\dbE\lan\cP\cY_{\scT}(t;x),\cY_{\scT}(t;x)\ran \\
&\q\les K\[\dbE\lan\cP(x-\xi),x-\xi\ran e^{-\rho t}+\big(e^{-\rho t}+e^{-\rho(T-t)}\big)\] \\
&\q\les K\[\big(|x|^2+\dbE|\xi|^2\big)e^{-\rho t}+\big(e^{-\rho t}+e^{-\rho(T-t)}\big)\] \\
&\q\les K\big(|x|^2+1\big)\[e^{-\rho t} + e^{-\rho(T-t)}\], \q\forall t\in[0,T]
\end{align*}
for some constant $K>0$ that is independent of $T$ and $x$. It follows that
$$ d(\mu_{\scT}(t;x),\mu)\les\sqrt{\dbE|\cX_{\scT}(t;x)-\cX(t)|^2 }
\les K(|x|+1)\[e^{-\rho t/2}+e^{-\rho(T-t)/2}\],\q\forall t\in[0,T]. $$
This completes the proof.
\end{proof}

Let (A1)--(A2) hold. Let $P\in\dbS^n$ be the unique solution to the ARE \rf{ARE:game}
such that the conditions (i)--(ii) in \autoref{thm:ARE-uniqueness} hold.
Let $\Th$ be defined as in \rf{def:Th}. Let $\f$ be the solution to the algebra equation
\begin{equation}\label{ME:phi}
(A+B\Th)^\top\f + (C+D\Th)^\top P\si + \Th^\top r + Pb + q = 0,
\end{equation}
that is,
$$\f = -[(A+B\Th)^\top]^{-1}[(C+D\Th)^\top P\si + \Th^\top r + Pb + q], $$
and define
\begin{equation}\label{def:v}
v\deq-(R+D^\top PD)^{-1}(B^\top\f+D^\top P\si+r).
\end{equation}
Consider the following SDE over $[0,\i)$:
\begin{equation}\label{SDE:TP-LIM}
dX(t) = [(A+B\Th)X(t)+Bv+b]dt+[(C+D\Th)X(t)+Dv+\si]dW(t).
\end{equation}
Since $[A+B\Th,C+D\Th]$ is $L^2$-stable (see \autoref{thm:PTgoP}), by \autoref{lmm:SDE-stationary},
there exists a unique initial distribution $\mu^*$ with finite second moment such that
the corresponding solution $X^*(\cd)$ of \rf{SDE:TP-LIM} has a stationary distribution.
This means
\begin{equation}\label{m*}
\dbP(X(t)\in\G)=\mu^*(\G), \q\forall\G\in\sB(\dbR^n).
\end{equation}
Let
\begin{equation}\label{def:TP-LIM}
u^*(t) \deq \Th X^*(t)+v,\q t\ges0,
\end{equation}
and denote the (stationary) distribution of $u^*(t)$ by $\nu^*$.
Also, denote by $\mu_{\scT}(t,x)$ and $\nu_{\scT}(t,x)$ the distributions
of the optimal state process $\bX(t)$ and the open-loop saddle strategy
$\bar u_{\scT}(t)\deq\begin{pmatrix}\bar u_{1,\scT}(t) \\ \bar u_{2,\scT}(t)\end{pmatrix}$
of Problem (DG)$_{\scT}$ for the initial state $x$, respectively.
We now present the following result, which establishes the exponential turnpike
property for Problem (DG)$_{\scT}$.

\begin{theorem}\label{thm:TP-main}
Let {\rm(A1)--(A2)} hold. Then there exist constants $K,\l>0$,
independent of $x$ and $T$, such that
$$ d(\mu_{\scT}(t;x),\mu^*) + d(\nu_{\scT}(t;x),\nu^*)
\les K\big(|x|^2+1\big)\[e^{-\l t} + e^{-\l(T-t)}\], \q\forall t\in[0,T]. $$
\end{theorem}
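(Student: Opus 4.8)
The plan is to realize the optimal state process $\bX(\cd)$ of Problem (DG)$_{\scT}$ as the solution of a closed-loop SDE of the form \rf{SDE:TP} whose coefficients converge, as $T\to\i$, exponentially fast to those of the limiting equation \rf{SDE:TP-LIM}; the state estimate then follows from \autoref{prop:SDE-TP}, and the control estimate is obtained by transferring this bound through the closed-loop representation $\bar u_{\scT}=\bar\Th_{\scT}\bX+\bar v_{\scT}$. By \autoref{thm:uT-rep}(v)--(vi), $\bX(\cd)$ solves
$$ d\bX(t)=\big\{[A+B\bar\Th_{\scT}(t)]\bX(t)+B\bar v_{\scT}(t)+b\big\}dt+\big\{[C+D\bar\Th_{\scT}(t)]\bX(t)+D\bar v_{\scT}(t)+\si\big\}dW(t), $$
which is \rf{SDE:TP} with $\cA_{\scT}=A+B\bar\Th_{\scT}$, $\cC_{\scT}=C+D\bar\Th_{\scT}$, $\a_{\scT}=B\bar v_{\scT}+b$, $\b_{\scT}=D\bar v_{\scT}+\si$, while \rf{SDE:TP-LIM} is \rf{SDE:TP-limit} with $\cA=A+B\Th$, $\cC=C+D\Th$, $\a=Bv+b$, $\b=Dv+\si$, the system $[A+B\Th,C+D\Th]$ being $L^2$-stable by \autoref{thm:ARE-uniqueness}. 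Thus the hypotheses of \autoref{prop:SDE-TP} hold once we verify \rf{cdtn:SDE-TP}, which reduces to the two estimates
$$ |\bar\Th_{\scT}(t)-\Th|\les Ke^{-\l(T-t)}, \qq |\bar v_{\scT}(t)-v|\les Ke^{-\l(T-t)}, \qq t\in[0,T]. $$
The first follows by comparing \rf{def:barTh} and \rf{def:Th}: expanding the difference through $X^{-1}-Y^{-1}=X^{-1}(Y-X)Y^{-1}$ and using the uniform invertibility of $R+D^\top P_{\scT}(t)D$ from \autoref{crllry:unibound:R+DPD}, the uniform bound \rf{PT:uni-bound}, and the exponential convergence $|P_{\scT}(t)-P|\les Ke^{-\l(T-t)}$ of \autoref{thm:PT-P}, one obtains $|\bar\Th_{\scT}(t)-\Th|\les K|P_{\scT}(t)-P|\les Ke^{-\l(T-t)}$.

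The second estimate is the crux, since it requires controlling the ODE solution $\f_{\scT}(\cd)$ of \rf{ODE:phi} against its algebraic limit $\f$ of \rf{ME:phi}. Setting $\psi_{\scT}\deq\f_{\scT}-\f$ and subtracting \rf{ME:phi} from \rf{ODE:phi}, one finds that $\psi_{\scT}$ solves the linear terminal-value problem
$$ \dot\psi_{\scT}(t)=-[A+B\bar\Th_{\scT}(t)]^\top\psi_{\scT}(t)+E_{\scT}(t), \qq \psi_{\scT}(T)=-\f, $$
whose forcing $E_{\scT}$ gathers only the discrepancies $B[\Th-\bar\Th_{\scT}(t)]$ and $P_{\scT}(t)-P$ (paired with the fixed data $b,\si,r,\f,P,\Th$), so that $|E_{\scT}(t)|\les Ke^{-\l(T-t)}$ by the first estimate and \autoref{thm:PT-P}. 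Reversing time via $s=T-t$ turns this into a forward linear ODE driven by $[A+B\bar\Th_{\scT}(T-s)]^\top$, with initial value $-\f$ and forcing bounded by $Ke^{-\l s}$; since $A+B\Th$ is Hurwitz — the Lyapunov relation underlying the $L^2$-stability of $[A+B\Th,C+D\Th]$ forces $\cP(A+B\Th)+(A+B\Th)^\top\cP<0$ for some $\cP>0$ — and $\bar\Th_{\scT}\to\Th$ uniformly in $t$, a Lyapunov/Gronwall argument entirely parallel to the one in the proof of \autoref{prop:SDE-TP} gives $|\psi_{\scT}(t)|\les Ke^{-\l(T-t)}$. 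Feeding this together with \autoref{thm:PT-P} and \autoref{crllry:unibound:R+DPD} into \rf{def:vT} and \rf{def:v} yields $|\bar v_{\scT}(t)-v|\les Ke^{-\l(T-t)}$. Hence \rf{cdtn:SDE-TP} holds, and \autoref{prop:SDE-TP} (with $\mu=\mu^*$) delivers
$$ d(\mu_{\scT}(t;x),\mu^*)\les K(|x|^2+1)\big[e^{-\l t}+e^{-\l(T-t)}\big], \qq t\in[0,T]. $$

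For the control part I would use the coupling realized in the proof of \autoref{prop:SDE-TP}, in which $\bX(\cd)$ and the stationary solution $X^*(\cd)$ of \rf{SDE:TP-LIM} are driven by the same Brownian motion; that proof in fact furnishes the mean-square bound $\dbE|\bX(t)-X^*(t)|^2\les K(|x|^2+1)[e^{-\l t}+e^{-\l(T-t)}]$. Writing
$$ \bar u_{\scT}(t)-u^*(t)=\bar\Th_{\scT}(t)[\bX(t)-X^*(t)]+[\bar\Th_{\scT}(t)-\Th]X^*(t)+[\bar v_{\scT}(t)-v], $$
I would estimate the three terms using, respectively, the uniform bound \rf{ThT:uni-bound} together with the state estimate for $\dbE|\bX(t)-X^*(t)|^2$; the decay of $|\bar\Th_{\scT}(t)-\Th|$ together with the finite, time-invariant second moment $\dbE|X^*(t)|^2=\int_{\dbR^n}|y|^2\mu^*(dy)$ from \autoref{lmm:SDE-stationary}; and the decay of $|\bar v_{\scT}(t)-v|$. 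This produces $\dbE|\bar u_{\scT}(t)-u^*(t)|^2\les K(|x|^2+1)[e^{-\l t}+e^{-\l(T-t)}]$, and since $d(\nu_{\scT}(t;x),\nu^*)\les\sqrt{\dbE|\bar u_{\scT}(t)-u^*(t)|^2}$, the asserted bound for the control follows after absorbing $\sqrt{|x|^2+1}$ into $|x|^2+1$ and halving $\l$ (harmless, as $K,\l$ are generic). Adding the two estimates completes the proof. The main obstacle is the ODE stability analysis for $\psi_{\scT}=\f_{\scT}-\f$, i.e.\ the exponential convergence of the inhomogeneous correction $\f_{\scT}(\cd)$; everything else is a direct application of the results already established.
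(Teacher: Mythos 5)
Your proposal follows essentially the same route as the paper: write $\bar u_{\scT}=\bar\Th_{\scT}\bar X_{\scT}+\bar v_{\scT}$ via \autoref{thm:uT-rep}, deduce $|\bar\Th_{\scT}(t)-\Th|\les Ke^{-\l(T-t)}$ from \autoref{crllry:unibound:R+DPD} and \autoref{thm:PT-P}, control $\f_{\scT}-\f$ by a time-reversed Lyapunov/Gronwall argument (the paper simply invokes \autoref{prop:SDE-TP} in its degenerate ODE case, which is the same computation you carry out by hand), and then apply \autoref{prop:SDE-TP} to the closed-loop state equation. Your explicit three-term decomposition of $\bar u_{\scT}(t)-u^*(t)$ for the control distribution is a detail the paper leaves implicit, but it is exactly what its final appeal to \autoref{prop:SDE-TP} amounts to, so the two proofs coincide in substance.
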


\begin{proof}
By \autoref{thm:uT-rep},
\begin{equation}\label{rep:uT}
\bar u_{\scT}(t)=\bar\Th_{\scT}(t)\bar X_{\scT}(t)+\bar v_{\scT}(t),\q t\in[0,T],
\end{equation}
where $\bar\Th_{\scT}(\cd)$ and $\bar v_{\scT}(\cd)$ are defined
by \rf{def:barTh} and \rf{def:vT}, respectively.
Upon substitution of \rf{rep:uT} into the state equation \rf{state},
we see that $\bX(\cd)$ satisfies the following closed-loop system:
\begin{equation}\label{cloop*}\left\{\begin{aligned}
d\bX(t) &= \big\{[A+B\bar\Th_{\scT}(t)]\bX(t)+B\bar v_{\scT}(t)+b\big\}dt \\
        &\hp{=\ } + \big\{[C+D\bar\Th_{\scT}(t)]\bX(t)+D\bar v_{\scT}(t)+\si\big\}dW(t), \q t\in[0,T], \\
 \bX(0) &= x.
\end{aligned}\right.\end{equation}
According to \autoref{crllry:unibound:R+DPD} and \autoref{thm:PT-P},
there exist constants $K,\l>0$, independent of $T$ such that
\begin{align*}
|\Th-\bar\Th_{\scT}(t)|
&= \big|[R+D^\top P_{\scT}(t)D]^{-1}\big\{B^\top[P-P_{\scT}(t)]+D^\top[P-P_{\scT}(t)](C+D\Th)\big\}\big| \\
&\les \big|[R+D^\top P_{\scT}(t)D]^{-1}\big|\big(|B|+|D||C+D\Th|\big)|P-P_{\scT}(t)| \\
&\les Ke^{-\l(T-t)}, \q\forall t\in[0,T].
\end{align*}
To estimate $|\bar v_{\scT}(t)-v|$, let
$$ \phi_{\scT}(t) \deq \f_{\scT}(T-t), \q t\in[0,T]. $$
where $\f_{\scT}(\cd)$ is the solution to the ODE \rf{ODE:phi}. Then
\begin{equation}\label{ODE:phi*}\left\{\begin{aligned}
\dot\phi_{\scT}(t)
&= [A+B\bar\Th_{\scT}(T-t)]^\top\phi_{\scT}(t) + [C+D\bar\Th_{\scT}(T-t)]^\top P_{\scT}(T-t)\si \\
&\hp{=\ } +\bar\Th_{\scT}(T-t)^\top r + P_{\scT}(T-t)b + q, \q t\in[0,T], \\
\phi_{\scT}(0) &= 0.
\end{aligned}\right.\end{equation}
Comparing \rf{ODE:phi*} and \rf{ME:phi}, and then applying \autoref{prop:SDE-TP} to
the case of ordinary differential equations yields
$$|\f-\f_{\scT}(t)|=|\f-\phi_{\scT}(T-t)|\les K\[e^{-\l t}+e^{-\l(T-t)}\], \q\forall t\in[0,T] $$
for some constants $K,\l>0$ independent of $T$. As a consequence,
$$|v-\bar v_{\scT}(t)|\les K\[e^{-\l t}+e^{-\l(T-t)}\],\q\forall t\in[0,T],$$
with possibly different constants $K,\l>0$ that are independent of $T$.
Comparing \rf{cloop*} and \rf{SDE:TP-LIM}, and the applying \autoref{prop:SDE-TP}
again yields the desired result.
\end{proof}


\begin{thebibliography}{90}
\addtolength{\itemsep}{-1.0ex}


\bibitem{Bardi-Priuli2014}
\rm M. Bardi and F. S. Priuli,
\it Linear-quadratic $N$-person and mean-field games with ergodic cost,
\rm SIAM J. Control Optim., 52 (2014), pp. 3022--3052.


\bibitem{Bensoussan-Sung-Yam-Yung2016}
\rm A. Bensoussan, K. C. J. Sung, S. C. P. Yam, and S. P. Yung,
\it Linear-quadratic mean field games,
\rm J. Optim. Theory Appl., 169 (2016), pp. 496--529.


\bibitem{Breiten-Pfeiffer2020}
\rm T. Breiten and L. Pfeiffer,
\it On the turnpike property and the receding-horizon method for linear-quadratic optimal control problems,
\rm SIAM J. Control Optim., 58 (2020), pp. 1077--1102.


\bibitem{Caines-Huang2021}
\rm P. E. Caines and M. Huang,
\it Graphon mean field games and their equations,
\rm SIAM J. Control Optim., 59 (2021), pp. 4373--4399.


\bibitem{Damm-Grune-Stieler-Worthmann2014}
\rm T. Damm, L. Gr\"{u}ne, M. Stieler, and K. Worthmann,
\it An exponential turnpike theorem for dissipative discrete time optimal control problems,
\rm SIAM J. Control Optim., 52 (2014), pp. 1935--1957.


\bibitem{Dorfman-Samuelson-Solow1958}
\rm R. Dorfman, P. A. Samuelson, and R. M. Solow,
\it Linear Programming and Economics Analysis,
\rm McGraw-Hill, New York (1958).


\bibitem{Duncan2014}
\rm T. E. Duncan,
\it Linear-quadratic stochastic differential games with general noise processes,
\rm Internat. Ser. Oper. Res. Management Sci., 198 Springer, Cham, 2014, pp. 17--25.


\bibitem{Grune-Guglielmi2018}
\rm L. Gr\"{u}ne and R. Guglielmi,
\it Turnpike properties and strict dissipativity for discrete time linear quadratic optimal control problems,
\rm SIAM J. Control Optim., 56 (2018), pp. 1282--1302.


\bibitem{Hamadene1998}
\rm S.~Hamad\`{e}ne,
\it Backward-forward SDE's and stochastic differential games,
\rm Stoch. Proc. Appl., 77 (1998), pp. 1--15.


\bibitem{Hamadene1999}
\rm S.~Hamad\`{e}ne,
\it Nonzero sum linear-quadratic stochastic differential games and backward-forward equations,
\rm Stoch. Anal. Appl., 17 (1999), pp. 117--130.


\bibitem{Li-Shi-Yong2021}
\rm X. Li, J. Shi, and J. Yong,
\it Mean-field linear-quadratic stochastic differential games in an infinite horizon,
\rm ESAIM Control Optim. Calc. Var., 27 (2021), Paper No. 81.


\bibitem{Lou-Wang2019}
\rm H. Lou and W. Wang,
\it Turnpike properties of optimal relaxed control problems,
\rm ESAIM Control Optim. Calc. Var., 25 (2019), Paper No. 74.


\bibitem{Moon2021}
\rm J. Moon,
\it Linear-quadratic stochastic Stackelberg differential games for jump-diffusion systems,
\rm SIAM J. Control Optim., 59 (2021), pp. 954--976.


\bibitem{Moon2023}
\rm J. Moon,
\it Linear-quadratic stochastic leader-follower differential games for Markov jump-diffusion models,
\rm Automatica J. IFAC, 147 (2023), Paper No. 110713.


\bibitem{Mou-Yong2006}
\rm L. Mou and J. Yong,
\it Two-person zero-sum linear quadratic stochastic differential games by a Hilbert space method,
\rm J. Industrial Management Optim., 2 (2006), pp. 95--117.


\bibitem{Neumann1945}
\rm J. von Neumann,
\it A model of general economic equilibrium,
\rm Rev. Econ. Stud., 13 (1945), pp. 1--9.


\bibitem{Porretta-Zuazua2013}
\rm A. Porretta and E. Zuazua,
\it Long time versus steady state optimal control,
\rm SIAM J. Control Optim., 51 (2013), pp. 4242--4273.


\bibitem{Ramsey1928}
\rm F. P. Ramsey,
\it A mathematical theory of saving,
\rm The Economic Journal, 38 (1928),  pp. 543--559.


\bibitem{Sakamoto-Zuazua2021}
\rm N. Sakamoto and E. Zuazua,
\it The turnpike property in nonlinear optimal control---A geometric approach,
\rm Automatica J. IFAC, 134 (2021), Paper No. 109939.



\bibitem{Sun2021}
\rm J. Sun,
\it Two-person zero-sum stochastic linear-quadratic differential games,
\rm SIAM J. Control Optim., 59 (2021), pp. 1804--1829.


\bibitem{Sun-Wang-Yong2022}
\rm J. Sun, H. Wang, and J. Yong,
\it Turnpike properties for stochastic linear-quadratic optimal control problems,
\sl Chin. Ann. Math. Ser. B, \rm 43 (2022), pp. 999--1022.


\bibitem{Sun-Yong2014}
\rm J. Sun and J. Yong,
\it Linear quadratic stochastic differential games: Open-loop and closed-loop saddle points,
\rm SIAM J. Control Optim., 52 (2014), pp. 4082--4121.




\bibitem{Sun-Yong2020book-a}
\rm J. Sun and J. Yong,
\it Stochastic Linear-Quadratic Optimal Control Theory: Open-Loop and Closed-Loop Solutions,
\rm SpringerBriefs in Mathematics, Springer, Cham, 2020.


\bibitem{Sun-Yong2020book-b}
\rm J. Sun and J. Yong,
\it Stochastic Linear-Quadratic Optimal Control Theory: Differential Games and Mean-Field Problems,
\rm SpringerBriefs in Mathematics, Springer, Cham, 2020.


\bibitem{Sun-Yong2024}
\rm J. Sun and J. Yong,
\it Turnpike properties for mean-field linear-quadratic optimal control problems,
\rm SIAM J. Control Optim., 62 (2024), pp. 752--775.


\bibitem{Sun-Yong2024JDE}
\rm J. Sun and J. Yong,
\it Turnpike properties for stochastic linear-quadratic optimal control problems with periodic coefficients,
\rm J. Diff, Equ,, to appear.


\bibitem{Sun-Yong-Zhang2016}
\rm J. Sun, J. Yong, and S. Zhang,
\it Linear quadratic stochastic two-person zero-sum differential games in an infinite horizon,
\rm ESAIM Control Optim. Calc. Var., 22 (2016), pp. 743--769.


\bibitem{Trelat-Zhang2018}
\rm E. Tr\'{e}lat and C. Zhang,
\it Integral and measure-turnpike properties for infinite-dimensional optimal control systems,
\rm Math. Control Signals Syst., 30 (2018), https://doi.org/10.1007/s00498-018-0209-1.


\bibitem{Trelat-Zuazua2015}
\rm E. Tr\'{e}lat and E. Zuazua,
\it The turnpike property in finite-dimensional nonlinear optimal control,
\rm J. Differ. Equ., 258 (2015), pp. 81--114.


\bibitem{Yong-Zhou1999}
\rm J. Yong and X. Y. Zhou,
\it Stochastic Controls: Hamiltonian Systems and HJB Equations,
\rm Springer-Verlag, New York, 1999.


\bibitem{Yu-Zhang-Zhang2022}
\rm Z. Yu, B. Zhang, and F. Zhang,
\it One kind of linear-quadratic zero-sum stochastic differential game with jumps,
\rm Internat. J. Control, 95 (2022), pp. 1470--1481.


\bibitem{Zaslavski2019}
\rm A. J. Zaslavski,
\it Turnpike Conditions in Infinite Dimensional Optimal Control,
\rm Springer Optim. Appl. 80, Springer, Cham, 2019.

\end{thebibliography}
\end{document}